\documentclass{amsart}
\usepackage{amsmath, amscd, amssymb, amsthm}
\usepackage{bbm}
\usepackage{latexsym}
\usepackage{amsfonts}
\usepackage{graphicx}
\usepackage{xcolor}

\usepackage[all,cmtip]{xy}
\usepackage[colorlinks,linkcolor=blue,breaklinks=true,urlcolor=blue,citecolor=blue,anchorcolor=blue,pagebackref]{hyperref}%
\usepackage{geometry}
\newtheorem{theorem}{Theorem}
\newtheorem{lemma}{Lemma}
\newtheorem{corollary}[theorem]{Corollary}

\newtheorem{proposition}{Proposition}

\newtheorem{conjecture}{Conjecture}

\renewcommand*\backref[1]{}
\renewcommand*\backrefalt[4]{ \ifcase #1 \or (cited on page #2) \else (cited on pages #2) \fi}

\newcommand{\be}{\begin{equation}}
\newcommand{\ee}{\end{equation}}
\newcommand{\bea}{\begin{eqnarray}}
\newcommand{\eea}{\end{eqnarray}}

\def\XXint#1#2#3{{\setbox0=\hbox{$#1{#2#3}{\int}$ }
\vcenter{\hbox{$#2#3$ }}\kern-.6\wd0}}

\begin{document}

\title[Balanced Bismut torsion-parallel fourfolds with constant holomorphic sectional curvature]{Balanced Bismut torsion-parallel fourfolds with constant holomorphic sectional curvature}

\author{Qingsong Wang}
\address{Qingsong Wang. Hal{\i}c{\i}o\u{g}lu Data Science Institute, University of California San Diego, La Jolla, CA 92093, USA}
\email{qswang92@gmail.com}
\thanks{Zheng is the corresponding author. He is partially supported by the National Natural Science Foundation of China
under Grant Nos.~12471039 and~12141101, and is supported by the 111 Project D21024.}

\author{Fangyang Zheng}
\address{Fangyang Zheng. School of Mathematical Sciences, Chongqing Normal University, Chongqing 401331, China}
\email{20190045@cqnu.edu.cn; franciszheng@yahoo.com}

\subjclass[2020]{53C55 (primary), 53C05 (secondary)}
\keywords{holomorphic sectional curvature, Chern connection, Bismut connection, Bismut torsion parallel, Vaisman manifolds.}

\begin{abstract}
A long-standing conjecture in non-K\"ahler geometry asserts that a compact Hermitian manifold with constant Chern holomorphic sectional curvature is K\"ahler if the constant is non-zero and Chern flat if it is zero. In complex dimension $2$, the conjecture was established by Balas and Gauduchon in 1985 for non-positive curvature and by Apostolov, Davidov, and Muskarov in 1996 in general. In higher dimensions, known cases include twistor spaces, by work of Davidov, Grantcharov, and Muskarov; locally conformally K\"ahler manifolds with non-positive constant curvature, by work of H. Chen, L. Chen, and Nie; and non-balanced Bismut torsion parallel (BTP) manifolds, by work of S. Chen and Zheng. Using a classification result of Zhao and Zheng, S. Chen and Zheng also established the conjecture for balanced BTP threefolds. In this article, we prove the conjecture for all compact balanced BTP fourfolds. Although balanced BTP manifolds are highly restrictive, a classification remains unavailable in complex dimensions $4$ and higher. Our analysis may provide insight into their structure in complex dimension $4$.
\end{abstract}

\maketitle

\tableofcontents

\markleft{Qingsong Wang and Fangyang Zheng}
\markright{BTP metrics with constant holomorphic sectional curvature}

\section{Introduction and statement of results}\label{intro}

A long-standing conjecture in non-K\"ahler geometry concerns the classification of ``Hermitian space forms'':

\begin{conjecture}[{\bf Constant Holomorphic Sectional Curvature Conjecture}] \label{conj1}
Any compact Hermitian manifold with constant holomorphic sectional curvature must be either K\"ahler or Chern flat.
\end{conjecture}

For a given Hermitian manifold $(M^n,g)$, the holomorphic sectional curvature in the direction of $X$ is  defined by
$ H(X) = R_{X\bar{X}X\bar{X}}/|X|^4$,
where $X$ is any non-zero complex tangent vector of $(1,0)$ type and $R$ is the curvature tensor of the Chern connection $\nabla$ of $g$.

Recall that complete K\"ahler manifolds with constant holomorphic sectional curvature are called {\em complex space forms}. Their universal covers are the complex projective space ${\mathbb C}{\mathbb P}^n$, the complex Euclidean space ${\mathbb C}^n$, or the complex hyperbolic space ${\mathbb C}{\mathbb H}^n$, equipped with (a constant multiple of) the standard metrics. On the other hand, by the classical result of Boothby \cite{Boothby} in 1958, compact Chern flat manifolds are exactly compact quotients of complex Lie groups (equipped with left-invariant Hermitian metrics). Compact Chern flat manifolds can be non-K\"ahler, and often are, when $n\geq 3$ (while in the non-compact case there exist  non-K\"ahler Chern flat complete Hermitian surfaces \cite{YangZ}).

So the above conjecture simply says that if $(M^n,g)$ is a compact Hermitian manifold with $H=c$, then either $g$ is K\"ahler (hence $(M^n,g)$ is a complex space form), or $g$ is Chern flat (namely, $R=0$), in which case its universal cover is a complex Lie group (equipped with a left-invariant Hermitian metric).

Conjecture \ref{conj1} is known to be true when $n=2$, by the work of Balas and Gauduchon \cite{BG} (see also \cite{Balas}) in 1985 for the $c\leq 0$ case, and by
Apostolov, Davidov, and Muskarov \cite{ADM} in 1996 in the general case, as a corollary of their classification theorem for compact self-dual Hermitian surfaces.

For $n\geq 3$, the conjecture is still largely open, except in some special cases. The first substantial result towards the conjecture is the one obtained by Davidov, Grantcharov, and Muskarov \cite{DGM}, in which they showed among other things that the only twistor space with constant holomorphic sectional curvature is the complex space form ${\mathbb C}{\mathbb P}^3$. More recently, H. Chen, L. Chen, and Nie \cite{CCN} considered locally conformally K\"ahler manifolds and confirmed the conjecture in the $c\leq 0$ cases. They also pointed out the necessity of the compactness assumption in the conjecture by explicit examples. Tang \cite{Tang} proved the conjecture under the additional assumption that the metric is Chern K\"ahler-like, meaning that the curvature tensor $R$ obeys all the K\"ahler symmetries. In \cite{ZhouZ}, Zhou and Zheng proved that any compact balanced threefold with zero {\em real bisectional curvature}, a notion introduced in \cite{XYangZ} which is slightly stronger than $H$, must be Chern flat. Also, in \cite{LZ} and \cite{RZ}, Zheng and collaborators confirmed the conjecture under the additional assumption that either $(M^n,g)$ is a complex nilmanifold with nilpotent complex structure $J$ (in the sense of \cite{CFGU}), or $g$ is Bismut K\"ahler-like (BKL), meaning that the curvature $R^b$ of the Bismut connection $\nabla^b$ of $g$ (\cite{Bismut}) obeys all K\"ahler symmetries.

A Hermitian metric is called {\em Bismut torsion-parallel} (BTP) if $\nabla^bT^b=0$, where $T^b$ is the torsion tensor of the Bismut connection $\nabla^b$. All non-K\"ahler BKL manifolds and all Vaisman manifolds are examples of non-balanced BTP manifolds, by the work of Zhao and Zheng \cite{ZZCrelle} and   by the work of Andrada and Villacampa \cite{AndradaV}, respectively.

Recall that a Hermitian metric $g$ is said to be {\em balanced} if $d(\omega^{n-1})=0$, where $\omega$ is the K\"ahler form of $g$ and $n$ is the complex dimension of the manifold. BTP metrics are systematically studied in \cite{ZhaoZ24}, where it was shown that the BTP condition is equivalent to some partial symmetry conditions on $R^b$, generalizing the AOUV conjecture which states that BKL $\subset$ BTP. The conjecture was raised by Angella, Otal, Ugarte, and Villacampa in \cite{AOUV} and was confirmed in \cite{ZZCrelle}.

The majority of BTP manifolds are non-balanced, but there are also balanced BTP manifolds when the complex dimension $n\geq 3$, though such metrics tend to be highly restrictive. In their recent work \cite{ZhaoZ25}, Zhao and Zheng give a coarse classification result for compact balanced (but non-K\"ahler) BTP threefolds.

In \cite[Theorem 1.2]{ChenZ26}, S. Chen and Zheng confirmed Conjecture \ref{conj1} for non-balanced BTP manifolds in all dimensions, and also for balanced BTP threefolds, utilizing the classification results of \cite{ZhaoZ25}. The main purpose of this article is to confirm Conjecture \ref{conj1} for all compact balanced BTP fourfolds:

\begin{theorem} \label{thm1}
Let $(M^4,g)$ be a compact Hermitian manifold of complex dimension $4$ such that $g$ is balanced and Bismut torsion-parallel (BTP). If $g$ has constant Chern holomorphic sectional curvature, then it must be either K\"ahler or Chern flat.
\end{theorem}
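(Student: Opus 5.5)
We work throughout in a local unitary frame $\{e_1,\ldots,e_4\}$ and write the Chern torsion components as $T^j_{ik}$. We use the structure theory of balanced BTP manifolds from \cite{ZhaoZ24, ZhaoZ25}. On a balanced BTP manifold the Bismut torsion is $\nabla^b$-parallel. Hence the Chern torsion components are constant in a suitable $\nabla^b$-parallel unitary frame, and the torsion satisfies algebraic identities. In particular the balanced condition gives $\sum_r T^r_{rk}=0$. The BTP condition gives the quadratic relations of \cite{ZhaoZ24}, obtained by differentiating $\nabla^bT^b=0$ and taking the skew part. From these one has the ``symmetric'' tensor $\phi$ with $\phi^j_i=\sum_{r,s}T^j_{rs}\overline{T^i_{rs}}$. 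This tensor is $\nabla^b$-parallel and Hermitian, so its eigenspaces give a $\nabla^b$-parallel splitting of $T^{1,0}M$.

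The first step is to write the Chern curvature $R$ in terms of the Bismut curvature $R^b$ and the torsion. We use the standard formula $R_{i\bar j k\bar l}=R^b_{i\bar j k\bar l}+(\text{quadratic terms in }T)$. In the BTP case the partial symmetries of $R^b$ proved in \cite{ZhaoZ24} let us express the symmetrization of $R$ in the four indices as the fully symmetrized $R^b$ plus an explicit constant quadratic expression in $T$.

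Constant holomorphic sectional curvature $H=c$ means exactly that the symmetrization of $R$ equals $\frac{c}{2}(g_{i\bar j}g_{k\bar l}+g_{i\bar l}g_{k\bar j})$. This turns into a pointwise algebraic identity relating $\mathrm{Sym}(R^b)$ and $T$.

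Next we take traces. Contracting over $i=j$ and summing, then doing the same over $k=l$, and using the Bianchi-type identities for $R^b$ in the BTP case, gives relations among the Chern scalar curvatures, the Bismut Ricci forms and $|T|^2$. On a compact balanced manifold the first and second Chern Ricci forms, integrated against $\omega^{n-1}$, coincide up to divergence terms. Since $|T|^2$ is constant, these relations force an equation of the form $c\,n(n+1)=\alpha|T|^2$ with a definite sign for $\alpha$. This should already show that $c\le 0$ when $T\neq 0$, and that $T=0$ when $c>0$; the case $T=0$ is Kähler.

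The main obstacle is the case $c\le0$ with $T\neq0$, where we must show $R=0$. Our plan is to use the eigenspace decomposition of $\phi$ in dimension $4$. The balanced condition forces $\phi$ to have small rank, and a short case analysis should show that its nonzero eigenspace has rank $2$ or $3$. For each possibility we normalize $T$ in a parallel frame to a short list of normal forms, analogous to the threefold list of \cite{ZhaoZ25} but with one extra direction. We then feed each normal form into the identity $\mathrm{Sym}(R)=\frac c2(gg)$, restricted to the various mixed blocks. Our expectation is that the off-diagonal blocks force $c=0$. After that, the parallel-torsion structure combined with compactness should give $R^b$ equal to the quadratic torsion term, so that $R=0$, which is Chern flat by Boothby.

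The delicate point is ruling out, in the rank-$3$ case, a one-dimensional parallel complement on which $R^b$ could carry a free curvature function. For this we would try the Bianchi identity restricted to that line, together with an integral argument on the compact manifold that the function integrates to zero and has a sign.
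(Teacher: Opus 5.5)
There is a genuine gap. One step is impossible as stated, and the mechanism that actually proves the theorem is missing from your outline.

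\textbf{The trace step cannot work.} No identity of the form $c\,n(n+1)=\alpha|T|^2$ holds under the hypotheses, whatever $\alpha$ is. The Fubini--Study metric on ${\mathbb C}{\mathbb P}^4$ is balanced and BTP with $T=0$ and $c>0$, which rules out every $\alpha$. Compact quotients of $(\mathrm{SL}(2,{\mathbb C}),g_{\kappa})\times({\mathbb C},g_0)$ are balanced, BTP and Chern flat, so they have $c=0$ but $T\neq0$. Your two conclusions ($c\le0$ if $T\neq0$, and $T=0$ if $c>0$) are true, but only as consequences of the theorem itself. What the contractions actually give is $\operatorname{tr}(S)=\frac{c}{2}n(n+1)$ for the first Bismut Ricci form. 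The torsion terms cancel because $\operatorname{tr}A=\operatorname{tr}B$, so this identity carries no sign information about $c$. In the paper, $c\neq0$ is excluded only inside each rank case, using the full uncontracted equations. For $r_B=3$, for instance, one obtains $6c=\|E\|^2$ and then rules out $c>0$ through $E^{\ast}E(E^{\ast}E+4cI)=0$.

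\textbf{The key tool and the rank reduction are missing.} Balancedness does not force $r_B\in\{2,3\}$. Excluding $r_B=4$ is the bulk of the paper. When $S\neq0$ this uses an eigenvalue-sign analysis via $(S_{i\bar i}+S_{k\bar k}-S_{j\bar j})T^j_{ik}=0$. When $S=0$ it uses a multiplicity analysis of $B$ and the normal form $T^1_{23}=T^2_{14}=T^3_{24}=T^4_{13}=1$. The cases $r_B=1,2$ must also be excluded; $r_B=2$ is impossible, not one of the cases to be treated. All of this relies on a fact you never invoke. Under BTP plus constant Chern holomorphic sectional curvature, the BTP partial symmetries of $R^b$ together with $\widehat R_{i\bar jk\bar\ell}=\frac c2(\delta_{ij}\delta_{k\ell}+\delta_{i\ell}\delta_{kj})$ determine $R^b$ completely as an explicit quadratic expression in $T$ (Lemma~\ref{lemmaRb}). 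Then $\nabla^bT=0$ gives $R^b\cdot T=0$, which is \eqref{eq:stab}, an overdetermined cubic system in $T$.

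Consequently, in the rank-$3$ case there is no ``free curvature function'' on the kernel line to control by a Bianchi identity or an integral argument. That case is settled pointwise:
\begin{itemize}
\item Lemma~\ref{lemmakernelB} gives $2cI_3=3E^{\ast}E-2EE^{\ast}$.
\item The stabilizer identity gives $ED-DE-cE=0$, hence $E=0$ and $c=0$.
\item Then $\ker B$ is Levi-Civita parallel, the universal cover splits as $N^3\times{\mathbb C}$, and the Zhao--Zheng rank-$3$ analysis makes $N^3$ Chern flat.
\end{itemize}

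Your plan also relies on normal forms ``analogous to the threefold list'' in dimension $4$. No such list is available; the special frames for balanced threefolds do not exist when $n\ge4$, which is exactly why the paper works through the cubic system instead. Your final step (``compactness should give $R^b$ equal to the quadratic torsion term'') is not yet an argument.

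A minor point: $\nabla^b$-parallel frames do not exist here, since a balanced non-K\"ahler metric is never Bismut flat. Frames in which $T$ has constant components do exist, by radial parallel transport.
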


We remark that for BTP manifolds, when the metric is non-balanced, the vector field $X$ dual to Gauduchon's torsion $1$-form $\eta$ (\cite{Gauduchon}) is a holomorphic vector field with constant norm, and $X$ belongs to the kernel of the Bismut curvature tensor $R^b$. This gives a starting point in the study of Conjecture \ref{conj1} and makes the analysis relatively easy. In the balanced (but non-K\"ahler) case, on the other hand, $\eta =0$ and we do not have a flat direction of $R^b$ to work with, so the study of Conjecture \ref{conj1} becomes more challenging. As observed in \cite{ZhouZ}, balanced threefolds admit special unitary frames under which the torsion tensor takes  a particularly simple form. This is the starting point in the coarse classification for balanced BTP threefolds in \cite{ZhaoZ25}, which enabled Chen and Zheng to prove Conjecture \ref{conj1} for such threefolds. In dimensions $n\geq 4$, this is no longer the case. In what follows we will give an outline of our strategy for proving Theorem \ref{thm1}.

Let $(M^n,g)$ be a BTP manifold. Let $e=\{ e_1, \ldots , e_n\}$ be a local unitary frame of type $(1,0)$ tangent vectors, and denote by  $\varphi =\{ \varphi_1, \ldots , \varphi_n\}$ the coframe of local $(1,0)$-forms dual to $e$ (namely, $\varphi_i(e_j)=\delta_{ij}$ for any $1\leq i,j\leq n$). Let $T^j_{ik}$ be the components under $e$ of the Chern torsion tensor $T$, and $R^b$ the curvature tensor of the Bismut connection $\nabla^b$. Note that the Bismut torsion $T^b$ can be expressed in terms of the Chern torsion $T$ and vice versa, so the BTP condition $\nabla^bT^b=0$ is equivalent to $\nabla^bT=0$ (\cite{ZhaoZ24}). Since $\nabla^bT=0$ implies $(R^b\cdot T)^{\ell}_{pq}=0$, the following identity holds on any BTP manifold under any local unitary frame $e$:
\begin{equation}  \label{eq:stab}
 (R^b_{i\bar{j}}\cdot T)^{\ell}_{pq} := \sum_{r=1}^n \big\{ R^b_{i\bar{j}r\bar{\ell}}T^r_{pq} - R^b_{i\bar{j}p\bar{r}}T^{\ell}_{rq} - R^b_{i\bar{j}q\bar{r}}T^{\ell}_{pr} \big\} =0, \ \ \ \ \ \ \ \ \ \forall \ 1\leq i,j,\ell , p, q \leq n.
 \end{equation}
Next, let us consider the expression
\begin{equation}
\sigma_B=\sqrt{-1}\sum_{i,j=1}^n B_{i\bar{j}} \,\varphi_i \wedge \overline{\varphi}_j =\sqrt{-1}\sum_{i,j=1}^n \big( \sum_{r,s=1}^nT^j_{rs} \overline{T^i_{rs}} \big) \,\varphi_i \wedge \overline{\varphi}_j.
\end{equation}
Clearly, this is independent of the choice of the local unitary frame $e$ and hence is a globally defined non-negative $(1,1)$-form on $M^n$. We will denote by $r_B$ the number of positive eigenvalues of $\sigma_B$, and call it the {\em rank of the $B$ tensor}. Since $\nabla^bB=0$, $r_B$ is a constant.

As observed in \cite{ChenZ26}, under the BTP and constant Chern holomorphic sectional curvature assumptions, the Bismut curvature components $R^b_{i\bar{j}k\bar{\ell}}$ can be explicitly expressed as a quadratic polynomial in $T$. So (\ref{eq:stab}) becomes an over-determined system of cubic equations in $T$. When $n=4$, we will divide the proof  into two cases, depending on whether $r_B<4$ or $r_B=4$. The first case is easier as the torsion components $T^j_{ik}$ have more zero terms. In the second case, which leads to a contradiction and therefore cannot occur, we will focus our discussion on the identity obtained by contracting (\ref{eq:stab}) after simultaneously diagonalizing $B$ and $S=Ric^b$. When $S$ is not identically zero, this reduced form of (\ref{eq:stab}) will allow us to rule out the case $r_B=4$. This is the content of \S 4. The more technical case when $S$ is identically zero will be  discussed in the last section.

The proof, together with the classification result of \cite{ZhaoZ25}, also leads to the following explicit description of the Chern flat case:

\begin{corollary}
Let $(M^4,g)$ be a compact, balanced BTP manifold in complex dimension $4$ with constant Chern holomorphic sectional curvature. Then one of the following holds:
\begin{enumerate}
\item  $g$ is K\"ahler. In this case $(M^4,g)$ is a complex space form.
\item  $g$ is non-K\"ahler and Chern flat. In this case the universal cover of $(M^4,g)$ is holomorphically isometric to $(\mbox{SL}(2,{\mathbb C}), \lambda g_{\kappa}) \times ({\mathbb C},g_0)$. Here $g_0$ is the flat metric on ${\mathbb C}$, $\lambda >0$ is a constant, and $g_{\kappa}$ is the standard Killing metric.
    \end{enumerate}
\end{corollary}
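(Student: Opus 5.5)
We take Theorem~\ref{thm1} as given and deduce the Corollary from it together with the classification of Chern flat balanced BTP manifolds. By Theorem~\ref{thm1}, $g$ is either K\"ahler or Chern flat. In the K\"ahler case, the assumption of constant holomorphic sectional curvature means, by the classical result recalled in the introduction, that $(M^4,g)$ is locally isometric to $\mathbb{CP}^4$, $\mathbb C^4$ or $\mathbb{CH}^4$ with a multiple of the standard metric. Compactness and completeness then make $(M^4,g)$ a complex space form, which is item (1).

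Now suppose $g$ is non-K\"ahler and Chern flat. By Boothby's theorem, the universal cover is a simply connected complex Lie group $G$ with a left-invariant Hermitian metric. In a left-invariant unitary frame $e_i$, the Chern torsion is given by the structure constants: $T^j_{ik}=-C^j_{ik}$ with $[e_i,e_k]=\sum_j C^j_{ik}e_j$. I would then translate the two hypotheses into Lie algebra conditions on $\mathfrak g$.
\begin{itemize}
\item The balanced condition becomes $\sum_i T^i_{ik}=0$, i.e.\ $\mathrm{tr}(\mathrm{ad}_X)=0$ for all $X$, so $\mathfrak g$ is unimodular.
\item The BTP condition $\nabla^b T=0$ can be written using $\nabla^b=\nabla+\gamma$, where $\nabla$ is the Chern connection (flat on left-invariant frames) and $\gamma$ is linear in $T$. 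It becomes the algebraic condition that $\gamma(T)$ acts as a derivation annihilating $T$, which is essentially \eqref{eq:stab} with $R^b$ replaced by the corresponding quadratic expression.
\end{itemize}
The known result (Zhao--Zheng, used in \cite{ChenZ26} for threefolds) is that Chern flat BTP metrics force $\mathfrak g$ to be a direct sum of $\mathfrak{sl}(2,\mathbb C)$-type factors and abelian pieces, with the metric restricting to a multiple of the Killing metric on each simple factor. I would re-derive or cite this in dimension $4$ as follows:
\begin{enumerate}
\item Show that the $B$-tensor is parallel. Then $\ker B$ is an abelian ideal $\mathfrak a$, and $\ker B^\perp$ is a subalgebra $\mathfrak s$ on which the torsion is nondegenerate.
\item Use unimodularity and the derivation condition to show $\mathfrak s$ is semisimple, with the metric $\mathrm{ad}$-invariant up to scale.
\item Since $\dim_{\mathbb C}\mathfrak g=4$ and there is no semisimple complex Lie algebra of dimension $4$, conclude $\mathfrak s\cong\mathfrak{sl}(2,\mathbb C)$ and $\mathfrak a\cong\mathbb C$, orthogonal and commuting.
\end{enumerate}
This gives the universal cover as $\mathrm{SL}(2,\mathbb C)\times\mathbb C$ with the product metric $\lambda g_\kappa+g_0$, which is item (2).

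The main obstacle is step~(2), showing that the BTP condition forces the metric on $\mathfrak s$ to be a multiple of the Killing form rather than an arbitrary left-invariant metric. I would first try normalizing by a unitary change of frame so that $B$ is diagonal on $\mathfrak s$, and then show that its eigenvalues are equal. Since $\nabla^bB=0$ and $\mathrm{ad}$-invariance propagate, eigenspaces of $B$ are ideals, and simplicity of $\mathfrak{sl}(2,\mathbb C)$ then forces a single eigenvalue. If needed, I would instead appeal directly to the classification in \cite{ZhaoZ25}, as the statement indicates.
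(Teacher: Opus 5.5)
Your top-level reduction (Theorem~\ref{thm1}, then Boothby, then a classification of Chern flat balanced BTP metrics on $4$-dimensional complex Lie groups) is legitimate, and it is organized differently from the paper. The paper does not deduce the corollary from the bare statement of Theorem~\ref{thm1}. Its proof shows $r_B\in\{0,3\}$. In Proposition~\ref{proprank3}, Lemma~\ref{lemmakernelB} and (\ref{eq:Rb}) force $c=0$ and $E=0$, so $\ker B$ is $\nabla^g$-parallel and $\widetilde M=N^3\times\mathbb C$. Chern flatness of $N$ then comes from the threefold analysis of \cite{ZhaoZ25}, and the identification of $\widetilde M$ comes from the Podest\`a--Zheng classification \cite[Theorem 1.2, Theorem 3.4]{PodestaZ}.

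Once Theorem~\ref{thm1} is granted, \cite{PodestaZ} alone finishes your argument. A non-abelian reductive complex Lie algebra of dimension $4$ must be $\mathfrak{sl}(2,\mathbb C)\oplus\mathbb C$, and the BTP metric is an orthogonal product with $\lambda g_\kappa$ on the simple factor. As written, however, your proposal never secures this step. The ``known result'' you invoke is \cite{PodestaZ}, not a Zhao--Zheng theorem, and in general it allows arbitrary simple factors, not only $\mathfrak{sl}(2,\mathbb C)$. Your fallback \cite{ZhaoZ25} classifies balanced BTP \emph{threefolds}. It cannot be applied to a fourfold until the flat factor has been split off, which is exactly what Proposition~\ref{proprank3} does and what your sketch only asserts.

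Your self-contained sketch can be completed, but its key steps are currently unjustified.
\begin{enumerate}
\item \emph{Why eigenspaces of $B$ are ideals.} In a left-invariant unitary frame the Chern connection form vanishes and $T(X,Y)=-[X,Y]$. Moreover, $\nabla^b_X$ and $\nabla^b_{\overline X}$ act on left-invariant $(1,0)$-fields as $\mathrm{ad}_X$ and $-(\mathrm{ad}_X)^{\ast}$. Hence $\nabla^bB=0$ says that $B$, viewed as an endomorphism, commutes with every $\mathrm{ad}_X$, so its eigenspaces are ideals.
\item \emph{Step (3), ``orthogonal and commuting''.} From $B(u,\bar u)=\sum_{r,s}|\langle T(e_r,e_s),u\rangle|^2$ one gets $(\ker B)^{\perp}=[\mathfrak g,\mathfrak g]$. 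The ideal $\ker B$ then satisfies $[\mathfrak g,\ker B]\subset\ker B\cap[\mathfrak g,\mathfrak g]=0$, so it is central. It follows that $\mathfrak s=[\mathfrak s,\mathfrak s]$ is perfect.
\item \emph{Semisimplicity.} The semisimplicity part of your step (2), via unimodularity and the derivation condition, is then unnecessary. By the Levi decomposition, a perfect complex Lie algebra of dimension at most $4$ is $0$ or $\mathfrak{sl}(2,\mathbb C)$.
\item \emph{From $B|_{\mathfrak s}=\mu I$ to $\lambda g_\kappa$.} Simplicity gives $B|_{\mathfrak s}=\mu I$, but you still must show this forces the metric to be $\lambda g_\kappa$; this step is missing from your plan. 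Write $[e_2,e_3]=\sum_jL_{1j}e_j$ and cyclically. Then $L$ is symmetric, the matrix $(B_{i\bar j})$ equals $2L^{\ast}L$, and a unitary change of frame $P$ replaces $L$ by $\det(P)\,{}^t\!\bar P L\bar P$. Takagi factorization plus a phase change then give $L=\sqrt{\mu/2}\,I$, i.e.\ $g|_{\mathfrak s}=\lambda g_\kappa$ with $\lambda=2/\mu$.
\end{enumerate}
Finally, ``$\mathrm{ad}$-invariant'' cannot be meant literally for a Hermitian metric on a non-abelian complex Lie algebra, since $\mathrm{ad}_{iX}=i\,\mathrm{ad}_X$ would force $\mathrm{ad}_X=0$. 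The correct notion is invariance under the compact real form.
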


Let $\{ X, Y, Z\}$ be a basis of the Lie algebra ${\mathfrak s}{\mathfrak l}(2,{\mathbb C})$ so that $[X,Y]=Z$, $[Y,Z]=X$, and $[Z,X]=Y$. Using $\{ X,Y,Z\}$ as a unitary basis defines a Hermitian inner product on ${\mathfrak s}{\mathfrak l}(2,{\mathbb C})$ which is independent of the choice of the basis. This inner product corresponds to a left-invariant Hermitian metric $g_{\kappa}$ on the simple complex Lie group $\mbox{SL}(2,{\mathbb C})$. The metric is balanced, BTP, and  Chern flat.

In general dimensions, Podest\`a and Zheng characterized compact BTP manifolds that are Chern flat \cite[Theorem 1.2, Theorem 3.4]{PodestaZ}. Their universal covers are complex reductive Lie groups. The BTP metrics preserve the product structure and are unique up to a Killing-isometry on each simple factor. In particular, in complex dimensions $3$ and $4$, up to a constant multiple of the metric, the only (non-K\"ahler) compact, Chern flat BTP manifolds are quotients of $(\mbox{SL}(2,{\mathbb C}), g_{\kappa})$ or $(\mbox{SL}(2,{\mathbb C}), g_{\kappa}) \times ({\mathbb C}, g_0)$.

We conclude this introduction with a few remarks. First, in Conjecture \ref{conj1}, one can replace the constant Chern holomorphic sectional curvature assumption by the constant Levi-Civita (or Bismut) holomorphic sectional curvature assumption. The proof of Theorem \ref{thm1} is very likely extendable to such cases. Secondly, a natural question is how to extend Theorem \ref{thm1} to higher dimensions, namely, to confirm Conjecture \ref{conj1} for balanced BTP manifolds in dimensions $5$ or higher. The proof of Theorem \ref{thm1} uses a detailed case-by-case analysis that does not seem to extend easily, and new ideas and insight are needed. Thirdly, balanced BTP manifolds form a highly restrictive class of special Hermitian manifolds, though its classification is still wide open in dimensions $4$ or higher. A technical difficulty is that we do not know where to start, as the $3$-dimensional case was indeed too special. Although the analysis in this article concerns the special case of constant Chern holomorphic sectional curvature, it provides information about the structure of balanced BTP fourfolds. Hopefully the discussions can be pushed further to generate some useful information which might help lead to some classification result for such manifolds in dimension $4$ or higher.

\vspace{0.3cm}

\section{Preliminaries}

We begin by fixing notation. Let $(M^n,g)$ be a Hermitian manifold. Denote the Chern and Bismut connections of $g$ by $\nabla$ and $\nabla^b$, their torsion tensors by $T$ and $T^b$, and their curvature tensors by $R$ and $R^b$, respectively. The Chern holomorphic sectional curvature is equal to a constant $c$ if and only if
$$ \widehat{ R}_{i\bar{j}k\bar{\ell}}  : = \frac{1}{4}\big( R_{i\bar{j}k\bar{\ell}} + R_{k\bar{j}i\bar{\ell}} + R_{i\bar{\ell}k\bar{j}} + R_{k\bar{\ell}i\bar{j}} \big) =  \frac{c}{2} \big( \delta_{ij}\delta_{k\ell} + \delta_{i\ell}\delta_{kj} \big) ,$$
for any $1\leq i,j,k,\ell \leq n$ under any local unitary frame $e$. When $g$ is BTP and has constant Chern holomorphic sectional curvature, \cite[Lemma 8]{ChenZ26} gives an expression for $R^b_{i\bar{j}k\bar{\ell}}$. Note that the Chern connection corresponds to the $t=0$ case in the family of Gauduchon connections, while the notation $v$ and $w$ was defined in the formula five lines above Lemma 7 in \cite{ChenZ26}.

\begin{lemma}[\cite{ChenZ26}]  \label{lemmaRb}
Let $(M^n,g)$ be a BTP manifold whose Chern holomorphic sectional curvature is equal to a constant $c$. Then under any local unitary frame $e$,
\begin{equation} \label{eq:Rb}
R^b_{i\bar{j}k\bar{\ell}} = \frac{c}{2} \big( \delta_{ij}\delta_{k\ell} + \delta_{i\ell}\delta_{kj} \big) - \frac{1}{2}\sum_{r=1}^n T^r_{ik} \overline{ T^r_{j\ell }} - \frac{3}{4}\sum_{r=1}^n \big( T^j_{ir} \overline{ T^k_{\ell r}} + T^{\ell}_{kr} \overline{ T^i_{j r}}  \big) + \frac{1}{4}\sum_{r=1}^n \big( T^{\ell}_{ir} \overline{ T^k_{j r}} + T^{j}_{kr} \overline{ T^i_{\ell r}}  \big),
\end{equation}
for any $1\leq i,j,k,\ell \leq n$.
\end{lemma}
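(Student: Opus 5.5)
The plan is to reduce everything to the Chern curvature, whose totally symmetrized part $\widehat R$ is prescribed by the hypothesis $H\equiv c$. Since $\widehat R$ is symmetric in $(i,k)$ and in $(j,\ell)$, the task is to determine $R^b$ from its symmetric part together with its skew parts. I will show that, in the BTP case, both the skew parts and the difference $R^b-R$ are explicit quadratic expressions in $T$ with no derivative terms.

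\emph{Step 1.} Write $\nabla^b=\nabla+\gamma$. Here $\gamma$ is the standard tensor built linearly from the Chern torsion: under a unitary frame, $\gamma_{e_k}e_i$ and $\gamma_{\bar e_k}e_i$ are combinations of $T^j_{ki}$ and $\overline{T^k_{ji}}$. Expanding the curvature of $\nabla+\gamma$ gives
$$R^b = R + (\nabla\gamma\text{-terms}) + \gamma\wedge\gamma.$$
I will then rewrite $\nabla\gamma = \nabla^b\gamma - \gamma * \gamma$. Since the BTP condition is equivalent to $\nabla^b T=0$ (as recalled in the introduction), all first-derivative terms drop out. This leaves an identity
$$R^b_{i\bar j k\bar\ell}=R_{i\bar j k\bar\ell}+Q_{i\bar jk\bar\ell}(T,\overline T),$$
where $Q$ is an explicit quadratic form in $T$.

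\emph{Step 2.} Use the first Bianchi identity for the Bismut connection. Its torsion is $T^b$ with $\nabla^bT^b=0$, so the cyclic sum reduces to algebraic terms $T^b*T^b$, because the $\nabla^b T^b$ and $dT^b$ contributions become quadratic or vanish. Taking the $(1,0),(0,1)$-components, I expect to obtain formulas for
$$R^b_{i\bar jk\bar\ell}-R^b_{k\bar j i\bar\ell} \quad\text{and}\quad R^b_{i\bar jk\bar\ell}-R^b_{i\bar \ell k\bar j}$$
as quadratic expressions in $T$. These are the partial-symmetry characterizations of BTP from \cite{ZhaoZ24}, which I would quote directly.

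\emph{Step 3.} Decompose $R^b$ into its component symmetric in both $(i,k)$ and $(j,\ell)$, plus the skew corrections. The corrections are determined by Step 2: averaging over the four index swaps, $R^b_{i\bar jk\bar\ell}$ equals $\widehat{R^b}_{i\bar jk\bar\ell}$ plus a quadratic term in $T$. By Step 1, $\widehat{R^b}=\widehat R+\widehat Q$, and by hypothesis $\widehat R=\frac c2(\delta_{ij}\delta_{k\ell}+\delta_{i\ell}\delta_{kj})$. Collecting everything gives $R^b$ as $\frac c2(\cdots)$ plus a quadratic polynomial in $T$.

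\emph{Step 4.} Simplify that polynomial into the three types of contractions appearing in (\ref{eq:Rb}):
$$T^r_{ik}\overline{T^r_{j\ell}},\qquad T^j_{ir}\overline{T^k_{\ell r}}+\text{conj.},\qquad T^\ell_{ir}\overline{T^k_{jr}}+\text{conj.}$$
This uses the skew-symmetry $T^j_{ik}=-T^j_{ki}$ to merge terms.

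The main obstacle is bookkeeping: getting exactly the coefficients $-\frac12,-\frac34,\frac14$ requires careful tracking of the conventions for $\gamma$ and for the curvature sign. As a consistency check, I will use the Chern-flat example $(\mathrm{SL}(2,\mathbb C),g_\kappa)$, which is BTP with $c=0$ and $R=0$, and whose $R^b$ can be computed directly from the structure constants.
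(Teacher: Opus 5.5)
Your outline is correct and follows essentially the derivation behind \cite[Lemma 8]{ChenZ26}, which the paper simply quotes (there it is carried out for the Gauduchon connections, and the Chern case is $t=0$): BTP gives $\nabla^b\gamma=0$, hence $R^b(X,Y)=R(X,Y)-[\gamma_X,\gamma_Y]+\gamma_{T^b(X,Y)}$; the torsion Bianchi identity with $\nabla^bT^b=0$ fixes $R^b_{i\bar jk\bar\ell}-R^b_{k\bar ji\bar\ell}$, and the other skew part follows by conjugation; the hypothesis fixes the symmetrization. The bookkeeping you defer does recover (\ref{eq:Rb}): the terms $T^j_{ir}\overline{T^k_{\ell r}}$, $T^\ell_{kr}\overline{T^i_{jr}}$, $T^\ell_{ir}\overline{T^k_{jr}}$, $T^j_{kr}\overline{T^i_{\ell r}}$ (summed over $r$) form one orbit under $i\leftrightarrow k$, $j\leftrightarrow\ell$, with total coefficient $-1$ both in (\ref{eq:Rb}) and in $R^b-R$, and $T^r_{ik}\overline{T^r_{j\ell}}$ symmetrizes to zero, so the symmetrizations agree, while the skew parts of (\ref{eq:Rb}) reproduce exactly the Bianchi formula.
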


Recall that Gauduchon's torsion $1$-form $\eta$ is defined by $d(\omega^{n-1})=-\eta\wedge \omega^{n-1}$ where $\omega$ is the K\"ahler form of $g$. By definition, the metric $g$ is balanced if $\eta =0$. Write $\eta=\sum_i\eta_i\varphi_i$, where $\varphi$ is the coframe dual to $e$. Then $\eta_i = \sum_{k=1}^n T^k_{ki}$ for each $i$ (see \cite{ZhaoZ24} for instance). Setting $k=\ell$ in (\ref{eq:Rb}) and summing over $k$ from $1$ to $n$ gives the following expression for the first Bismut Ricci tensor $S$:

\begin{lemma} \label{lemmaS}
Let $(M^n,g)$ be a balanced BTP manifold with constant Chern holomorphic sectional curvature $c$. Then under any local unitary frame $e$, the first Bismut Ricci curvature tensor $S$ has components
\begin{equation} \label{eq:S}
S_{i\bar{j}} : = \sum_{k=1}^n R^b_{i\bar{j}k\bar{k}} = \frac{c}{2}(n+1) \delta_{ij} + \frac{1}{4}\big( B_{i\bar{j}} - A_{i\bar{j}}  \big), \ \ \ \ \ \ \ \forall \ 1\leq i,j\leq n,
\end{equation}
where $A_{i\bar{j}} = \sum_{r,s}T^r_{is} \overline{ T^r_{js}} $, $B_{i\bar{j}} = \sum_{r,s}T^j_{rs} \overline{ T^i_{rs}} $.
\end{lemma}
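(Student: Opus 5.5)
The plan is a direct contraction of Lemma \ref{lemmaRb}: set $k=\ell$ in (\ref{eq:Rb}), sum over $k$ from $1$ to $n$, and evaluate each of the four groups of terms separately. The two facts needed are the balanced condition, in the form $\eta_r=\sum_{k}T^k_{kr}=0$ for every $r$, and the definitions of $A$ and $B$. No estimates are involved; the only care required is in tracking which index is contracted in each torsion product.

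\emph{Constant term.} It gives
$$\frac{c}{2}\sum_k(\delta_{ij}\delta_{kk}+\delta_{ik}\delta_{kj}) = \frac{c}{2}(n\,\delta_{ij}+\delta_{ij}) = \frac{c}{2}(n+1)\delta_{ij}.$$

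\emph{Second term.} It becomes
$$-\frac12\sum_{r,k}T^r_{ik}\overline{T^r_{jk}} = -\frac12 A_{i\bar j},$$
directly from the definition of $A$.

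\emph{Third term (coefficient $-\tfrac34$).} Here the contraction over $k$ falls on one factor of each product. Explicitly,
$$\sum_{k,r}\Big(T^j_{ir}\,\overline{T^k_{kr}}+T^k_{kr}\,\overline{T^i_{jr}}\Big) = \sum_r\Big(T^j_{ir}\,\overline{\eta_r}+\eta_r\,\overline{T^i_{jr}}\Big).$$
This vanishes because $g$ is balanced, so the entire $-\tfrac34$ contribution drops out. This is the only place the balanced hypothesis is used.

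\emph{Fourth term (coefficient $+\tfrac14$).} It yields
$$\frac14\sum_{k,r}\Big(T^k_{ir}\overline{T^k_{jr}}+T^j_{kr}\overline{T^i_{kr}}\Big) = \frac14 A_{i\bar j}+\frac14 B_{i\bar j},$$
after renaming the summation indices to match the definitions $A_{i\bar j}=\sum_{r,s}T^r_{is}\overline{T^r_{js}}$ and $B_{i\bar j}=\sum_{r,s}T^j_{rs}\overline{T^i_{rs}}$.

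Adding the four contributions gives
$$S_{i\bar j}=\frac{c}{2}(n+1)\delta_{ij}-\frac12A_{i\bar j}+\frac14A_{i\bar j}+\frac14B_{i\bar j},$$
which is exactly (\ref{eq:S}). The main, and only modest, obstacle is the index bookkeeping in the third and fourth groups. One must check that the third group really contracts the upper index of $T$ against one of its lower indices, so that it produces $\eta$ (using the antisymmetry $T^k_{ir}=-T^k_{ri}$ if needed to match the convention $\eta_i=\sum_k T^k_{ki}$). One must also check that the fourth group yields precisely $A$ and $B$, with the correct conjugation pattern, rather than their transposes.
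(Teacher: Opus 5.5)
Your proposal is correct and is the paper's argument: set $k=\ell$ in (\ref{eq:Rb}) and sum over $k$. The $-\tfrac34$ group vanishes because $\eta_r=\sum_k T^k_{kr}=0$, and the remaining groups give $\tfrac{c}{2}(n+1)\delta_{ij}-\tfrac12A_{i\bar j}+\tfrac14A_{i\bar j}+\tfrac14B_{i\bar j}$; your index bookkeeping checks out term by term.
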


Note that both $A$ and $B$ are independent of the choice of the local unitary frame $e$ and thus are globally defined $(1,1)$ tensors on $M^n$. Their traces are clearly equal, so the trace of $S$ is given by
\begin{equation} \label{eq:traceS}
\mbox{tr}(S) := \sum_{i=1}^n S_{i\bar{i}} \,= \,\frac{c}{2}n(n+1).
\end{equation}

Fix $p\in M$, and let $V=T_pM\cong{\mathbb C}^n$ denote the holomorphic tangent space at $p$. Denote by $\mbox{ker}(B)=\{ v \in V \mid B_{v\bar{u}}=0 \ \,\forall \, u\in V\}$ the kernel of $B$, and by $(\mbox{ker}(B))^{\perp}$ its orthogonal complement in $V$. We have the following lemma:
\begin{lemma} \label{lemmakernelB}
Let $(M^n,g)$ be a BTP manifold. Then, under any local unitary frame $e$, we have
\begin{equation} \label{eq:kernelB}
 R^b_{i\bar{j}k\bar{\ell}} = 0,  \ \ \ \ \ \ \ \mbox{if} \ e_k\in (\mbox{ker}(B))^{\perp} \  \mbox{and} \ \,e_{\ell} \in \mbox{ker}(B),
\end{equation}
for any $1\leq i,j\leq n$.
\end{lemma}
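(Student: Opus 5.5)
Throughout, $e$ is a local unitary frame at a point of a BTP manifold. The lemma uses only $\nabla^bT=0$, not the curvature assumption, so I will argue from the stability identity (\ref{eq:stab}). The key observation is that $B$ is $\nabla^b$-parallel, since it is built from $T$ and the metric, both $\nabla^b$-parallel. Hence $\ker(B)$ and $(\ker(B))^\perp$ are $\nabla^b$-parallel subbundles of $T^{1,0}M$. For any parallel subbundle, the curvature endomorphism $R^b_{i\bar j}$ (an element of $\mathfrak{u}(n)$) preserves it. Since the connection is unitary, it also preserves the orthogonal complement. Therefore $\langle R^b_{i\bar j} e_k, e_\ell\rangle = 0$ whenever $e_k\in(\ker B)^\perp$ and $e_\ell\in\ker B$. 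This is exactly $R^b_{i\bar jk\bar\ell}=0$.

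The steps, in order, are as follows.

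\textbf{Parallelism of $B$.} Check that $\nabla^bB=0$. This follows from $\nabla^bT=0$, $\nabla^bg=0$, and the fact that $\nabla^b$ preserves $J$, so it commutes with types. The paper already uses this to assert that $r_B$ is constant.

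\textbf{Algebraic derivation.} To avoid any subtlety about the rank being locally constant, I would also derive the statement purely algebraically at a point from the contracted identity $R^b_{i\bar j}\cdot B=0$. This identity follows from (\ref{eq:stab}) because $R^b_{i\bar j}$ acts as a derivation on tensors built from $T$ and $\overline T$. Written out, it reads
$$\sum_r\big(R^b_{i\bar j r\bar \ell}B_{k\bar r}-R^b_{i\bar j k\bar r}B_{r\bar\ell}\big)=0,$$
up to the index and conjugation convention for $B$. Equivalently, the matrix $R^b_{i\bar j}$ commutes with the Hermitian matrix $B$.

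\textbf{Diagonalize $B$.} Choose $e$ so that $B$ is diagonal with entries $b_k$, where $b_k>0$ for $e_k\in(\ker B)^\perp$ and $b_\ell=0$ for $e_\ell\in\ker B$. The commutation relation becomes
$$(b_k-b_\ell)\,R^b_{i\bar jk\bar\ell}=0.$$
Hence $R^b_{i\bar jk\bar\ell}=0$ whenever $b_k\neq b_\ell$, in particular when $b_k>0=b_\ell$.

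\textbf{Extend to any frame.} The statement for an arbitrary unitary frame then follows by tensoriality. The condition is linear in $e_k$ ranging over $(\ker B)^\perp$ and antilinear in $e_\ell$ ranging over $\ker B$, and both are spanned by eigenvectors of $B$.

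\textbf{Main obstacle.} The main point requiring care is verifying the derivation step: that (\ref{eq:stab}) indeed implies $R^b\cdot B=0$ with the correct index placement, given $B_{i\bar j}=\sum T^j_{rs}\overline{T^i_{rs}}$. This is a routine Leibniz-rule computation. The action of $R^b_{i\bar j}$ on $\overline T$ is the conjugate-transpose action, and the skew-Hermitian property $R^b_{i\bar jk\bar\ell}=\overline{R^b_{j\bar i\ell\bar k}}$ makes the terms combine into a commutator.
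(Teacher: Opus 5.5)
Your proof is correct, but it takes a more roundabout route than the paper's and proves more along the way.

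\textbf{The paper's argument.} It works directly in the given frame. Since $e_\ell\in\ker(B)$ gives $0=B_{\ell\bar\ell}=\sum_{p,q}|T^\ell_{pq}|^2$, the last two terms of (\ref{eq:stab}) vanish identically, leaving $\sum_r R^b_{i\bar jr\bar\ell}T^r_{pq}=0$. Contracting with $\overline{T^k_{pq}}$ gives $\sum_r B_{k\bar r}R^b_{i\bar jr\bar\ell}=0$, and positivity of $B$ on $(\ker(B))^\perp$ finishes. In terms of your commutator identity, this is just the observation that the second term $\sum_r R^b_{i\bar jk\bar r}B_{r\bar\ell}$ is trivially zero when $e_\ell\in\ker(B)$. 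So the paper needs none of the following:
\begin{itemize}
\item the conjugate of (\ref{eq:stab}) for $R^b_{j\bar i}$;
\item the Leibniz bookkeeping with the skew-symmetry of $T$;
\item a diagonalizing frame followed by a tensoriality step.
\end{itemize}

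\textbf{What your route buys.} You get the full relation $[R^b_{i\bar j},B]=0$, i.e.\ $R^b_{i\bar jk\bar\ell}=0$ whenever $e_k$ and $e_\ell$ lie in different eigenspaces of $B$. This is precisely the $(1,1)$-part of $\Theta^bB=B\Theta^b$ in (\ref{eq:BTheta}). The paper obtains that relation separately, in the proof of Lemma \ref{lemmadiag}, by differentiating $dB=\theta^bB-B\theta^b$. Your parallel-subbundle argument is the invariant form of that same computation. So your approach effectively merges the two lemmas, at the cost of extra machinery for the kernel statement alone.

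\textbf{One small correction.} $R^b_{i\bar j}$ is not an element of $\mathfrak{u}(n)$. For instance, the matrix $(R^b_{i\bar ik\bar\ell})_{k,\ell}$ is Hermitian, and in general the adjoint of $R^b_{i\bar j}$ is $R^b_{j\bar i}$. So the identity $R^b_{i\bar jk\bar\ell}=\overline{R^b_{j\bar i\ell\bar k}}$ is not a skew-Hermitian property. Rather, $R^b_{i\bar j}$ is a complex combination of real curvature operators $R^b(X,Y)\in\mathfrak{u}(n)$. This does not affect your argument: each $R^b(X,Y)$ preserves the parallel subbundle $\ker(B)$ and, the connection being metric, also its orthogonal complement.
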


\begin{proof}
Let $e$ be a local unitary frame. Under the BTP assumption, the equation (\ref{eq:stab}) holds. Now if $e_k\in (\mbox{ker}(B))^{\perp}$,  $e_{\ell} \in \mbox{ker}(B)$, then by the definition of $B$ we have $T^{\ell}_{pq}=0$ for any $1\leq p,q\leq n$. So (\ref{eq:stab}) becomes
$$ \sum_{r=1}^n  R^b_{i\bar{j}r\bar{\ell}}\,T^r_{pq}  =0, \ \ \ \ \ \ \ \ \ \ \forall \ 1\leq i,j,p,q\leq n. $$
Multiplying both sides of the above equation by $\overline{T^k_{pq}}$ and summing over $p$ and $q$, we obtain $\sum_r R^b_{i\bar{j}r\bar{\ell}}\,B_{k\bar{r}} = 0$. Since $B$ is positive definite on $(\mbox{ker}(B))^{\perp}$, this means that $R^b_{i\bar{j}k\bar{\ell}}=0$ whenever $e_k\in (\mbox{ker}(B))^{\perp}$. This finishes the proof of the lemma.
\end{proof}

Next we show that under the BTP condition, the tensors $B$ and $S$ can be simultaneously diagonalized:

\begin{lemma} \label{lemmadiag}
Let $(M^n,g)$ be a BTP manifold. Then locally there always exist  unitary frames under which both $B$ and $S$ are diagonal, namely,  $S_{i\bar{j}}=B_{i\bar{j}}=0$ for all  $1\leq i\neq j\leq n$.
\end{lemma}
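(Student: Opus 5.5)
To prove Lemma \ref{lemmadiag}, it suffices to show that $B$ and $S$ commute as Hermitian endomorphisms of $V=T_pM$ (after raising indices with $g$): $\sum_r S_{i\bar r}B_{r\bar j}=\sum_r B_{i\bar r}S_{r\bar j}$. Two commuting Hermitian operators are simultaneously unitarily diagonalizable at each point. Since $\nabla^bT=0$ gives $\nabla^bB=0$, and $\nabla^bS$ is controlled through the same parallel structure, the eigenvalues are constant and the joint eigenspaces form $\nabla^b$-parallel (in particular smooth) subbundles. Choosing local unitary frames adapted to this orthogonal splitting then gives the lemma. Even without parallelism of $S$, it is enough to work with $B$'s parallel eigen-decomposition and then diagonalize $S$ within each eigenbundle.

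The key step is deriving the commutation relation from (\ref{eq:stab}). Contract (\ref{eq:stab}) with $\overline{T^{m}_{pq}}$, summing over $p,q$. The first term gives $\sum_r R^b_{i\bar j r\bar\ell}B_{m\bar r}$. The last two terms combine, by antisymmetry of $T$ in its lower indices, into $-2\sum_{r,p,q}R^b_{i\bar j p\bar r}T^\ell_{rq}\overline{T^m_{pq}}$. This already shows that $B$, viewed as an endomorphism, is annihilated by the derivation action of each $R^b_{i\bar j}\in\mathfrak{u}(n)$: $[R^b_{i\bar j},B]=0$. Equivalently, $\nabla^bB=0$ implies $R^b\cdot B=0$, which can be written directly as
$$\sum_r\big(R^b_{i\bar j r\bar\ell}B_{k\bar r}-R^b_{i\bar j k\bar r}B_{r\bar\ell}\big)=0 .$$
Now set $i=j$ and sum, using the trace in the first pair of indices. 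On a BTP manifold, \cite{ZhaoZ24} shows that $R^b$ has partial symmetries, and in the balanced case the first and second Bismut Ricci forms agree (up to terms involving $\eta=0$). This turns $\sum_i R^b_{i\bar i k\bar\ell}$ into $S_{k\bar\ell}$ up to the Ricci identity, and the trace of the relation above becomes exactly $[S,B]=0$.

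I expect the main obstacle to be precisely this step: making sure the contraction over the first index pair yields the first Bismut Ricci $S$ (contraction over the second pair), rather than a different Ricci. If the symmetry from \cite{ZhaoZ24} is unavailable in the needed form, I would instead use Lemma \ref{lemmaS} directly when $H=c$: there $S=\frac c2(n+1)I+\frac14(B-A)$. In that case it suffices to show $[A,B]=0$, which I would attempt by contracting (\ref{eq:stab}) with $\overline{T^{\ell}_{pq'}}$-type terms, or by using the $\nabla^b$-parallelism of both $A$ and $B$ together with their decomposition into $R^b$-invariant subspaces. Since the lemma is stated for general BTP manifolds, however, the Ricci-symmetry route is the intended one.
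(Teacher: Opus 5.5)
Your argument is the paper's. $\nabla^bB=0$ yields $\Theta^bB=B\Theta^b$, which is your $R^b\cdot B=0$; note this comes from the Ricci identity, or from the Leibniz rule applied to $R^b\cdot T=0$, and not from the single contraction of (\ref{eq:stab}) with $\overline{T^m_{pq}}$ that you wrote, which does not by itself give it. The pair symmetry $R^b_{i\bar{j}k\bar{\ell}}=R^b_{k\bar{\ell}i\bar{j}}$ then turns the trace over the first index pair into $S$, so $S$ commutes with $B$ (i.e. is block diagonal on the eigenspaces of $B$) and can be diagonalized within each block. That pair symmetry holds for every BTP metric, so you need neither balancedness, nor parallelism of $S$, nor your fallback through Lemma \ref{lemmaS}.
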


\begin{proof}
Let $e$ be any local unitary frame. Under the BTP assumption, the Bismut curvature obeys the symmetry condition $R^b_{i\bar{j}k\bar{\ell}} = R^b_{k\bar{\ell}i\bar{j}}$ for any $1\leq i,j,k,\ell \leq n$. Denote the connection and curvature matrices of $\nabla^b$ under $e$ by $\theta^b$ and $\Theta^b$, respectively. Thus
$$ \nabla^be_k = \sum_{j =1}^n \theta^b_{kj} \otimes e_{j}, \ \ \ \Theta^b = d\theta^b - \theta^b \wedge \theta^b, \ \ \ \ \Theta^b_{k\ell } = \sum_{i,j=1}^n R^b_{i\bar{j}k\bar{\ell}} \,\varphi_i \wedge \overline{\varphi}_j, $$
for any $1\leq k, \ell \leq n$ where $\varphi$ is the coframe dual to $e$. Since $\nabla^bT=0$, we get $\nabla^bB=0$, that is,
$$ dB_{i\bar{j}} = \sum_r \big( \theta^b_{ir}B_{r\bar{j}} + B_{i\bar{r}} \overline{\theta^b_{jr}} \big) , $$
or $dB=\theta^bB-B\,\theta^b$ in matrix form. Here we used the fact that  $\theta^b$ is skew-Hermitian. Taking the exterior derivative of both sides of the last equality and using the structure equation, we obtain
\begin{equation} \label{eq:BTheta}
 0 \,= \,\Theta^b B - B \,\Theta^b.
 \end{equation}
The matrix $B$ is Hermitian symmetric, so we may choose a local unitary frame $e$ so that the matrix $B=(B_{i\bar{j}})$ under $e$ is diagonal. Group equal diagonal entries of $B$ into blocks. Equation (\ref{eq:BTheta}) then implies that $\Theta^b_{ij}=0$ whenever $B_{i\bar{i}} \neq B_{j\bar{j}}$, so $\Theta^b$ is block-diagonal. By the symmetry property of $R^b$ we have
$$ S_{i\bar{j}} = \sum_k R^b_{i\bar{j}k\bar{k}} = \sum_k R^b_{k\bar{k}i\bar{j}}, $$
which is the trace of the $(1,1)$-form $\sqrt{-1}\Theta^b_{ij}$ with respect to the K\"ahler form $\omega$ of $g$. Therefore we conclude that $S=(S_{i\bar{j}})= \mbox{tr}_{\omega}(\sqrt{-1}\Theta^b)$, so $S$ is also block diagonal under $e$. Within each block, $B$ is a multiple of the identity matrix, so by a suitable unitary change of basis in each block, we may assume that $S$ is diagonal, while $B$ remains the same. Under the new unitary frame both $B$ and $S$ are diagonalized simultaneously.
\end{proof}

As an immediate consequence of Lemma \ref{lemmaS} and Lemma \ref{lemmadiag}, we have the following lemma:

\begin{lemma} \label{lemmadiag3}
Let $(M^n,g)$ be a balanced BTP manifold with constant Chern holomorphic sectional curvature. Then locally there always exist unitary frames under which $B$, $A$, and $S$ are all diagonal.
\end{lemma}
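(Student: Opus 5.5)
This follows from Lemma \ref{lemmadiag} and Lemma \ref{lemmaS}. Since $g$ is BTP, Lemma \ref{lemmadiag} gives, near any point, a local unitary frame $e$ under which $B$ and $S$ are both diagonal, i.e., $B_{i\bar j}=S_{i\bar j}=0$ for $i\neq j$. It then suffices to show that $A$ is automatically diagonal under this same frame.

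Since $g$ is balanced with constant Chern holomorphic sectional curvature $c$, Lemma \ref{lemmaS} applies and gives, under any local unitary frame,
$$ S_{i\bar j} = \frac{c}{2}(n+1)\delta_{ij} + \frac14\big(B_{i\bar j}-A_{i\bar j}\big). $$
Solving for $A$ yields
$$ A_{i\bar j} = B_{i\bar j} - 4S_{i\bar j} + 2c(n+1)\delta_{ij}. $$
For $i\neq j$ every term on the right vanishes under the chosen frame. Hence $A_{i\bar j}=0$ for $i\neq j$, so $A$ is diagonal as well.

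The argument is just linear algebra once Lemma \ref{lemmadiag} is available; the only substantive input is that lemma, which rests on the BTP symmetry $R^b_{i\bar jk\bar\ell}=R^b_{k\bar\ell i\bar j}$ and the commutation $\Theta^bB=B\Theta^b$.
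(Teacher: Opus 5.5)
Your proposal is correct and is the same argument the paper intends: the paper calls this lemma an immediate consequence of Lemma \ref{lemmaS} and Lemma \ref{lemmadiag}. Your identity $A = B - 4S + 2c(n+1)I$ shows that $A$ is diagonal in any frame that diagonalizes both $B$ and $S$, which is exactly that consequence.
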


Finally, by contracting the first two indices $i$ and $j$ in (\ref{eq:stab}) and utilizing the symmetry condition $R^b_{i\bar{j}k\bar{\ell}} = R^b_{k\bar{\ell}i\bar{j}}$ of the Bismut curvature for BTP manifolds, we obtain the following lemma:

\begin{lemma} \label{lemmastabS}
Let $(M^n,g)$ be a BTP manifold, and let $e$ be a local unitary frame under which $S$ is diagonal. Then we have
\begin{equation} \label{eq:stabS}
(S_{i\bar{i}} + S_{k\bar{k}}-S_{j\bar{j}})\,T^j_{ik} = 0, \ \ \ \ \ \ \ \ \ \forall \ 1\leq i,j,k\leq n.
\end{equation}
\end{lemma}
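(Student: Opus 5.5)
We contract the stabilizer identity (\ref{eq:stab}) in its first pair of indices and then use the pair symmetry of $R^b$ to turn each term into a Ricci-type contraction. Take (\ref{eq:stab}) with the first two indices equal to $m$ and sum over $m$:
$$ \sum_{m,r} \big\{ R^b_{m\bar{m}r\bar{\ell}}T^r_{pq} - R^b_{m\bar{m}p\bar{r}}T^{\ell}_{rq} - R^b_{m\bar{m}q\bar{r}}T^{\ell}_{pr} \big\} =0 . $$
As in the proof of Lemma \ref{lemmadiag}, on a BTP manifold we have the symmetry $R^b_{i\bar{j}k\bar{\ell}} = R^b_{k\bar{\ell}i\bar{j}}$. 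Hence $\sum_m R^b_{m\bar{m}r\bar{\ell}} = \sum_m R^b_{r\bar{\ell}m\bar{m}} = S_{r\bar{\ell}}$, and similarly for the other two terms. The contracted identity therefore becomes
$$ \sum_r \big\{ S_{r\bar{\ell}}\,T^r_{pq} - S_{p\bar{r}}\,T^{\ell}_{rq} - S_{q\bar{r}}\,T^{\ell}_{pr} \big\} = 0, \qquad \forall\ 1\le \ell,p,q\le n . $$

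Now choose the frame so that $S$ is diagonal; this is possible locally by Lemma \ref{lemmadiag}, and in any case the statement only concerns such frames. The three sums then collapse to single terms, with $r=\ell$, $r=p$ and $r=q$ respectively:
$$ S_{\ell\bar{\ell}}\,T^{\ell}_{pq} - S_{p\bar{p}}\,T^{\ell}_{pq} - S_{q\bar{q}}\,T^{\ell}_{pq} = 0 . $$
Renaming $(\ell,p,q)=(j,i,k)$ and multiplying by $-1$ gives exactly $(S_{i\bar{i}}+S_{k\bar{k}}-S_{j\bar{j}})\,T^j_{ik}=0$, which is (\ref{eq:stabS}).

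The only point that needs care is the pair symmetry $R^b_{i\bar{j}k\bar{\ell}}=R^b_{k\bar{\ell}i\bar{j}}$. I take it from \cite{ZhaoZ24}, as was already done in the proof of Lemma \ref{lemmadiag}. Beyond that, one should watch the index convention in $S_{r\bar{\ell}}$: the pairing must match the definition $S_{i\bar{j}}=\sum_k R^b_{i\bar{j}k\bar{k}}$. Once $S$ is diagonal, no conjugation issues remain. No further obstacle is expected.
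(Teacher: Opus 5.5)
Your proof is correct and is essentially the paper's argument: contract the first index pair of (\ref{eq:stab}), use the BTP pair symmetry $R^b_{i\bar{j}k\bar{\ell}}=R^b_{k\bar{\ell}i\bar{j}}$ to identify each contraction with a component of $S$, and let the diagonality of $S$ collapse the three sums. Your index bookkeeping, including the final renaming $(\ell,p,q)=(j,i,k)$ and the sign change, is also right.
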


\vspace{0.3cm}

\section{Balanced BTP fourfolds with \texorpdfstring{$r_B<4$}{rB < 4}}

Let $(M^4,g)$ be a balanced BTP manifold such that its Chern holomorphic sectional curvature is equal to a constant $c$. The goal is to show that either $g$ is K\"ahler, in which case $(M^4,g)$ is a complex space form, or $g$ is non-K\"ahler and Chern flat, in which case, if we further assume that the manifold is compact, $(M^4,g)$ is a compact quotient of $\mbox{SL}(2,{\mathbb C}) \times {\mathbb C}$ with the product metric $\lambda g_{\kappa}\times g_0$, where $\lambda g_{\kappa}$ is a constant multiple of the Killing metric and $g_0$ is the flat Euclidean metric. In this section, we will deal with the $r_B<4$ case, and leave the $r_B=4$ case to the next two sections.

We will separate our discussions according to the values of $r_B$, the rank of the $B$ tensor. When $r_B=0$, we have $B=0$ and hence $T=0$, so $g$ is K\"ahler. So we just need to examine the cases $1\leq r_B<4$ in this section. For the most part, the argument works locally.

First let us consider the $r_B=1$ case. In this situation the argument works for all dimensions, and we have the following proposition:

\begin{proposition} \label{proprank1}
Let $(M^n,g)$ be a balanced BTP manifold with constant Chern holomorphic sectional curvature. Then $r_B\neq 1$.
\end{proposition}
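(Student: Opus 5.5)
The plan is to assume $r_B=1$ and derive a contradiction from Lemma \ref{lemmakernelB}, the balanced condition, and Lemma \ref{lemmastabS}. Work at a point with a local unitary frame $e$ under which $B$ is diagonal and $\ker(B)=\mathrm{span}\{e_1,\ldots,e_{n-1}\}$, so that $(\ker B)^{\perp}=\mathrm{span}\{e_n\}$ and $B_{n\bar n}=b>0$. By the definition of $B$, $T^j_{pq}=0$ whenever $e_j\in\ker(B)$. Hence the only possibly nonzero torsion components are $T^n_{ik}$, and these are skew-symmetric in $i,k$. The balanced condition gives $0=\eta_i=\sum_k T^k_{ki}=T^n_{ni}$. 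Therefore $T^n_{ik}$ can be nonzero only for $i,k\le n-1$. Let $P=(T^n_{ik})_{i,k<n}$, a skew-symmetric $(n-1)\times(n-1)$ matrix with $\sum_{i,k}|T^n_{ik}|^2=b$.

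\textbf{Step 1 (Lemma \ref{lemmakernelB}).} Lemma \ref{lemmakernelB} gives $R^b_{i\bar j n\bar \ell}=0$ for every $\ell<n$. Evaluate this with formula (\ref{eq:Rb}) for $k=n$, $\ell<n$, and $j=n$. Because of the vanishing pattern above, most quadratic terms drop out:
\begin{itemize}
\item $T^r_{in}=0$ for all $r$;
\item $T^\ell_{\ast\ast}=0$ since $\ell<n$;
\item $T^n_{nr}=0$ for all $r$.
\end{itemize}
What survives is the identity
$$0=\frac{c}{2}\delta_{i\ell}-\frac34\sum_r T^n_{ir}\overline{T^n_{\ell r}},\qquad i,\ell<n .$$
In matrix form this says $PP^{*}=\tfrac{2c}{3}I_{n-1}$. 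Taking the trace gives $b=\tfrac{2c}{3}(n-1)$. In particular $c>0$, since $c=0$ would force $b=0$.

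\textbf{Step 2 (compute $S$).} From $PP^*=\tfrac{2c}{3}I$ we get $A_{i\bar j}=\tfrac{2c}{3}\delta_{ij}$ for $i,j<n$, while all $A_{n\bar j}$ vanish. So $A$, $B$, and therefore $S$ are all diagonal in this frame, and Lemma \ref{lemmastabS} applies. Lemma \ref{lemmaS} then gives
$$S_{i\bar i}=\tfrac{c}{2}(n+1)-\tfrac{c}{6}\quad (i<n),\qquad S_{n\bar n}=\tfrac{c}{2}(n+1)+\tfrac{b}{4}=\tfrac{c}{2}(n+1)+\tfrac{c(n-1)}{6}.$$

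\textbf{Step 3 (Lemma \ref{lemmastabS}).} Since $b>0$, some $T^n_{ik}$ with $i,k<n$ is nonzero. Equation (\ref{eq:stabS}) then forces $S_{i\bar i}+S_{k\bar k}=S_{n\bar n}$, that is,
$$c(n+1)-\tfrac{c}{3}=\tfrac{c}{2}(n+1)+\tfrac{c(n-1)}{6}.$$
Multiplying by $6$ and simplifying gives $2c(n+1)=0$, so $c=0$. Step 1 then gives $b=0$, a contradiction. Hence $r_B\neq1$.

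The main obstacle is the bookkeeping in Step 1: one must check carefully which of the terms in (\ref{eq:Rb}) vanish, using both the kernel condition and the balanced condition $T^n_{n\ast}=0$. Once the relation $PP^*=\tfrac{2c}{3}I$ is in hand, the remaining steps are routine arithmetic.
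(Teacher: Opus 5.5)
Your proof is correct, but it takes a different route from the paper's. The paper uses neither Lemma~\ref{lemmakernelB} nor the Ricci tensor here. It evaluates the single family of components of (\ref{eq:stab}) in which $i=j=\ell$ is the non-kernel direction and $p,q$ lie in $\ker(B)$. By (\ref{eq:Rb}), $R^b$ contributes $c$ in the non-kernel slot and $\frac{c}{2}I+\frac{1}{4}EE^{\ast}$ on $\ker(B)$, with $E$ playing the role of your $P$. The $c$-terms then cancel identically, and skew-symmetry leaves $EE^{\ast}E=0$, so $E=0$ without needing any information about $c$.

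You instead use the components with upper index in $\ker(B)$ (via Lemma~\ref{lemmakernelB}) to get the quantitative relation $PP^{\ast}=\frac{2c}{3}I_{n-1}$. You then use the contracted identity (\ref{eq:stabS}), with the eigenvalues from Lemma~\ref{lemmaS}, to force $c=0$. The vanishing pattern in your Step~1 is right. In Steps~2--3, your $S$ has trace $\frac{c}{2}n(n+1)$, as (\ref{eq:traceS}) requires, and the eigenvalue relation does reduce to $2c(n+1)=0$.

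The paper's route is shorter and rests on one identity. Yours needs three lemmas but yields more structure along the way. For example, $P$ is a scalar multiple of a unitary matrix, so for even $n$ (where a skew-symmetric matrix of odd size is singular) Step~1 alone already gives the contradiction. It also combines Lemma~\ref{lemmakernelB} and (\ref{eq:stabS}), the two tools that drive the paper's later cases.
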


\begin{proof}
Assume, for a contradiction, that $r_B=1$. We may assume that $n\geq 2$, as for $n=1$ all Hermitian metrics are K\"ahler (hence $r_B=0$). Choose a local unitary frame $e$ so that $\mbox{ker}(B)$ is spanned by $\{ e_2, \ldots, e_n\}$. So $T^j_{ik}=0$ for all $i,k$ and for all $2\leq j\leq n$. Since $g$ is balanced, $T^1_{1k}=-T^2_{2k}-\cdots-T^n_{nk}=0$ for any $k$. Thus the only possibly non-zero Chern torsion components are $T^1_{ij}$, $2\leq i, j\leq n$. Write $E_{ij}=T^1_{ij}$. Then $E=(E_{ij})$ is a skew-symmetric $(n-1)\times (n-1)$ matrix. Since $T^{\ast}_{1\ast}=0$, by (\ref{eq:Rb}) we have
$$ R^b_{1\bar{1}k\bar{\ell}} = \frac{c}{2}(\delta_{k\ell} + \delta_{1k} \delta_{1\ell }) + \frac{1}{4}\sum_{r=1}^n T^1_{kr} \overline{T^1_{\ell r}} ,$$
hence $R^b_{1\bar{1}1\bar{1}}=c$, $R^b_{1\bar{1}k\bar{1}}=0$ for any $2\leq k\leq n$, and $R^b_{1\bar{1}p\bar{q}}= \frac{c}{2}\delta_{pq} + \frac{1}{4} (E E^{\ast})_{pq}$ for any $2\leq p,q\leq n$, where $E^{\ast}$ stands for the conjugate transpose of $E$. Substituting these expressions into (\ref{eq:stab}) with $i=j=\ell=1$ and $2\leq p,q\leq n$, we obtain
\begin{eqnarray*}
0 & = &  \sum_{r=1}^n \big( R^b_{1\bar{1}r\bar{1}}T^r_{pq} - R^b_{1\bar{1}p\bar{r}} T^1_{rq} -  R^b_{1\bar{1}q\bar{r}} T^1_{pr} \big) \\
& = & R^b_{1\bar{1}1\bar{1}}T^1_{pq} - \sum_{r=2}^n \big( R^b_{1\bar{1}p\bar{r}} T^1_{rq} + R^b_{1\bar{1}q\bar{r}} T^1_{pr} \big)\\
& = & -\frac{1}{4}\big( (EE^{\ast})E + E\,^t\!(EE^{\ast}) \big)_{pq}
\end{eqnarray*}
Since $^t\!E=-E$, the above equality says that $EE^{\ast}E=0$, hence $(EE^{\ast})^2=0$. As $EE^{\ast}$ is a non-negative Hermitian symmetric matrix, this implies that $EE^{\ast}=0$ hence $E=0$. But this means that $T=0$, contradicting the assumption $r_B=1$. Therefore under the balanced BTP and constant Chern holomorphic sectional curvature assumptions, the rank $r_B$ of the $B$ tensor cannot be $1$, and the proof of the proposition is completed.
\end{proof}

Next let us consider the $r_B=2$ case. We will restrict the dimension to $n=4$ now.

\begin{proposition} \label{proprank2}
Let $(M^4,g)$ be a balanced BTP manifold of dimension $4$ with constant Chern holomorphic sectional curvature. Then $r_B\neq 2$.
\end{proposition}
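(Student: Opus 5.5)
Assume $r_B=2$ and argue by contradiction, following Proposition \ref{proprank1}. By Lemma \ref{lemmadiag3}, choose a local unitary frame in which $B$, $A$, $S$ are all diagonal, with $\mbox{ker}(B)=\mbox{span}\{e_3,e_4\}$ and $B_{1\bar1}=b_1>0$, $B_{2\bar2}=b_2>0$. Then $T^j_{ik}=0$ for $j=3,4$, so only $T^1_{ik}$ and $T^2_{ik}$ survive. The balanced condition gives $T^1_{1k}+T^2_{2k}=0$ for all $k$.

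\textbf{Normal form.} Write $E^{(1)}=(T^1_{ik})$ and $E^{(2)}=(T^2_{ik})$. These are $4\times4$ skew-symmetric matrices, and the diagonal form of $B$ says they are orthogonal in the Hilbert--Schmidt sense with squared norms $b_1,b_2$. The plan is to exploit the lemmas that come from (\ref{eq:stab}).
\begin{itemize}
\item Lemma \ref{lemmakernelB} gives $R^b_{i\bar j k\bar\ell}=0$ for $k\in\{1,2\}$, $\ell\in\{3,4\}$. Through (\ref{eq:Rb}) this is a quadratic identity in $T$. In particular, for $i=j$ it forces mixed sums such as $\sum_r T^r_{ik}\overline{T^r_{i\ell}}$ to be controlled by the other terms.
\item Lemma \ref{lemmastabS} gives $(S_{i\bar i}+S_{k\bar k}-S_{j\bar j})T^j_{ik}=0$. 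Here $S_{i\bar i}=\frac{5c}{2}+\frac14(B_{i\bar i}-A_{i\bar i})$, so every nonzero torsion component forces an additive relation among the diagonal entries $s_i$ of $S$.
\end{itemize}

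\textbf{Case analysis.} Split according to which torsion components are nonzero.
\begin{itemize}
\item Suppose some $T^1_{34}\neq0$. Then $s_3+s_4=s_1$. If also $T^2_{34}\neq0$, then $s_1=s_2$.
\item If $T^1_{1k}\neq0$, then $s_k=0$ (taking $j=i=1$ gives $s_1+s_k-s_1=s_k$). Similarly, $T^1_{2k}\neq0$ gives $s_2+s_k=s_1$.
\end{itemize}
Combining these with the trace identity (\ref{eq:traceS}), $\sum s_i=10c$, and with $\sum_i A_{i\bar i}=\sum_i B_{i\bar i}=b_1+b_2$, should cut the possibilities down to a few configurations. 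In each configuration I would use a unitary rotation within $\mbox{span}\{e_3,e_4\}$ (and within $\mbox{span}\{e_1,e_2\}$ when $b_1=b_2$ and $s_1=s_2$) to normalize further, for example to make $T^1_{34}$ real and to kill some entries.

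\textbf{Conclusion.} In each configuration, plug the remaining entries into (\ref{eq:stab}) for suitable index choices, for instance $\ell\in\{1,2\}$ and $p,q\in\{3,4\}$, as in the $r_B=1$ proof. I expect these to produce matrix identities like $EE^*E=0$ that force the torsion to vanish, contradicting $r_B=2$. Alternatively, they may force $c=0$ together with a torsion of rank $3$ (the $\mathfrak{sl}(2,\mathbb C)$ pattern), which again contradicts $r_B=2$.

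\textbf{Main obstacle.} The hardest part should be the configuration where both $E^{(1)}$ and $E^{(2)}$ have nonzero entries in the $\{3,4\}$ block and also in the mixed positions $T^{1,2}_{a k}$ with $a\in\{1,2\}$, $k\in\{3,4\}$. There the cubic system does not decouple. My first attempt would be to take the $(\ell,p,q)=(1,3,4)$ and $(2,3,4)$ equations with $i=j$ summed (essentially the $S$-contraction), and then use the vanishing from Lemma \ref{lemmakernelB} to eliminate the mixed terms before reaching a positivity contradiction.
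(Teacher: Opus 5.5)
\textbf{There is a genuine gap.} What you have written is a search strategy, not an argument. Every step that would actually force $T=0$ is deferred: ``should cut the possibilities down'', ``I expect'', ``my first attempt would be''.

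The tool you lean on most, the eigenvalue relations of Lemma~\ref{lemmastabS}, cannot close the argument by itself. They only say that a nonzero component forces a linear relation among the $s_i$, and the $s_i$ are themselves quadratic in $T$. For example, take $c>0$ and the configuration $s_3=s_4=0$, $s_1=s_2=5c$. This forces only $T^1_{34}=T^2_{34}=0$. Every entry of $E=(T^j_{i3})$ and $F=(T^j_{i4})$, $i,j\in\{1,2\}$, remains allowed, and nothing from the trace identity excludes it; it merely requires $\|E\|^2=\|F\|^2=10c$. So you need a genuinely quadratic identity. The cubic system you plan to attack afterwards, including the ``main obstacle'' configuration, is never worked out.

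\textbf{The missing idea} is to use Lemma~\ref{lemmakernelB} through \eqref{eq:Rb} with specific index choices, not $i=j$.

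First, balancedness also gives $T^1_{12}=T^2_{12}=0$. So $T$ consists of the traceless $E,F$ together with $u=T^1_{34}$ and $v=T^2_{34}$.

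Now expand $R^b_{i\bar jk\bar\alpha}=0$ for $k\in\{1,2\}$, $\alpha\in\{3,4\}$:
\begin{itemize}
\item Taking $j\in\{3,4\}$ leaves only $\sum_rT^r_{ik}\overline{T^r_{34}}=0$.
\item Taking $i,j\in\{1,2\}$, only the terms with $r$ equal to the other index $\alpha'\in\{3,4\}$ survive. When $i=k$ this gives $E_{1r}\bar u=E_{2r}\bar v=0$, and likewise for $F$.
\item When $i\neq k$ it gives the pair $E_{2j}\bar u=3E_{1j}\bar v$ and $E_{1j}\bar v=3E_{2j}\bar u$. The coefficient mismatch $1$ versus $3$ forces both sides to vanish.
\end{itemize}
Hence $(u,v)\neq 0$ would force $E=F=0$, and then a combination of $e_1,e_2$ lies in $\ker B$. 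So $u=v=0$.

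Taking $i=\alpha$ then yields $2cI=3E^*E-2EE^*$, and the same for $F$. Its trace gives $\|E\|^2=\|F\|^2=4c$, which disposes of $c\le 0$ at once.

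For $c>0$ the paper finishes with entry-wise algebra, also using $3E^*F=2FE^*$. Alternatively, your $S$-relations now finish quickly:
\begin{itemize}
\item $s_3=\frac{5c}{2}-\frac14A_{3\bar3}=\frac{5c}{2}-\frac14\|E\|^2=\frac{3c}{2}\neq 0$, which forces $T^1_{13}=0$.
\item The diagonal of $2cI=3E^*E-2EE^*$ then gives $|T^1_{23}|^2=|T^2_{13}|^2=2c>0$.
\item The resulting relations $s_2+s_3=s_1$ and $s_1+s_3=s_2$ add up to $s_3=0$, a contradiction.
\end{itemize}
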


\begin{proof}
Assume, for a contradiction, that $r_B=2$. First let us choose a local unitary frame $e$ so that $\mbox{ker}(B)$ is spanned by $\{ e_3, e_4\}$. Then $T^3_{\ast \ast} = T^4_{\ast \ast}=0$. By the balanced assumption, $T^1_{12}=-T^3_{32}-T^4_{42}=0$. Similarly, $T^2_{12}=0$. Hence the only possibly non-zero torsion components under $e$ are the following:
\begin{equation} \label{eq:rank2}
E = (T^j_{i3})_{1\leq i,j\leq 2} = \left[ \begin{array}{cc} a & r\\p& -a \end{array} \right] , \ F = (T^j_{i4})_{1\leq i,j\leq 2} = \left[ \begin{array}{cc} b & s\\q& -b \end{array} \right],\ T^1_{34}=u, \ T^2_{34}=v.
\end{equation}
The traces of $E$ and $F$ are zero because the metric is balanced. By Lemma \ref{lemmakernelB}, we know that
\begin{equation} \label{eq:rank2a}
R^{b}_{i\bar{j}k\bar{\alpha}} =0,  \ \ \ \ \ \ \ \ \ \ \forall \ k\in \{ 1,2\}, \ \ \forall \ \alpha \in \{ 3,4\}, \ \  \forall \ 1\leq i,j \leq 4.
\end{equation}
On the other hand, since $g$ is BTP with constant Chern holomorphic sectional curvature, if we set \(\ell=\alpha\) in the equation (\ref{eq:Rb}), where \(k\in\{1,2\}\) and \(\alpha\in\{3,4\}\), then we get
\begin{equation} \label{eq:rank2b}
R^{b}_{i\bar{j}k\bar{\alpha}} = \frac{c}{2} \delta_{i\alpha} \delta_{kj} -\frac{1}{2} \sum_{r=1}^2 T^r_{ik}\overline{T^r_{j\alpha}}  + \frac{1}{4} \sum_{r=1}^4 \big( T^j_{kr}\overline{T^i_{\alpha r}} - 3 T^j_{ir}\overline{T^k_{\alpha r}} \big).
\end{equation}
When $j\in \{ 3,4\}$, the above equations (\ref{eq:rank2a}) and (\ref{eq:rank2b}) give us $\sum_r T^r_{ik} \overline{T^r_{34}} =0$. When $i\in \{ 3,4\}$, this leads to
\begin{equation}  \label{eq:rank2-1}
 E \left[ \begin{array}{c} \overline{u} \\ \overline{v} \end{array} \right] \ = \  F \left[ \begin{array}{c} \overline{u} \\ \overline{v} \end{array} \right]  = 0.
\end{equation}
Now let us assume that $j\in \{ 1,2\}$. When $i=k$, (\ref{eq:rank2a}) and (\ref{eq:rank2b}) give us
\begin{equation}  \label{eq:rank2-2}
 E_{1r} \overline{u} = F_{1r} \overline{u} = E_{2r} \overline{v} = F_{2r} \overline{v} = 0, \ \ \ \ \ \ \ \ \ \forall \ 1\leq r\leq 2.
\end{equation}
When $i,j \in \{ 1,2\}$ but $i\neq k$, we get from (\ref{eq:rank2a}) and (\ref{eq:rank2b}) that
$$  T^j_{k\beta} \overline{T^i_{34}} = 3  T^j_{i\beta} \overline{T^k_{34}}, \ \ \ \ \ \ \ \ \ \ \ j\in \{ 1,2\}, \ \beta \in \{ 3,4\}, \ \{i,k\} =\{ 1,2\}.$$
Taking $\beta=3$ and $(i,k)=(1,2)$ or $(2,1)$, we obtain $E_{2j}\overline{u}= 3E_{1j}\overline{v}$ and $E_{1j}\overline{v}= 3E_{2j}\overline{u}$, hence $E_{2j}\overline{u}= E_{1j}\overline{v}=0$. Similarly, by taking $\beta =4$ we get $F_{2j}\overline{u}= F_{1j}\overline{v}=0$. Thus we have
\begin{equation}  \label{eq:rank2-3}
 E_{2r} \overline{u} = E_{1r} \overline{v}=F_{2r} \overline{u} = F_{1r} \overline{v}=0, \ \ \ \ \ \ \ \ \ \forall \ 1\leq r\leq 2.
\end{equation}

We claim that $u=v=0$. To see this, assume on the contrary that $u\neq 0$ or $v\neq 0$. Then by (\ref{eq:rank2-2}) and (\ref{eq:rank2-3}) we have $E=F=0$. In this case the only possibly non-zero torsion components are $T^1_{34}=u$ and $T^2_{34}=v$. A non-trivial linear combination $w$ of $e_1$ and $e_2$ will then satisfy $T^w_{34}=0$ hence $T^w_{\ast \ast }=0$,  leading to $w\in \mbox{ker}(B)$, which is a contradiction to the assumption that $r_B=2$. So we know that we must have $u=v=0$.

Now let us come back to (\ref{eq:rank2a}) and (\ref{eq:rank2b}). Taking $j\in\{1,2\}$ and $i=\alpha$, we obtain
\begin{equation}  \label{eq:rank2-4}
 2cI = 3 E^{\ast}E - 2 E E^{\ast}, \ \ \ \ \ 2cI = 3 F^{\ast}F - 2 F F^{\ast} .
\end{equation}
Similarly, if we set $j\in \{ 1,2\}$ and $i\in \{ 3,4\}$ but $i\neq \alpha$, then we have
\begin{equation}  \label{eq:rank2-5}
  3 E^{\ast}F = 2 F E^{\ast}, \ \ \ \ \ 3 F^{\ast}E = 2 E F^{\ast} .
\end{equation}
Finally, by taking traces in (\ref{eq:rank2-4}), we get
\begin{equation}  \label{eq:rank2tr}
 4c \ = \, \ \parallel\! E\! \parallel^2 \ \, = \, \ \parallel\! F\! \parallel^2 .
\end{equation}

Let us proceed with the proof of Proposition \ref{proprank2}. We claim that $c$ must be $0$. Assume on the contrary that $c>0$. By writing out the elements of $E$ and $F$ in the equations (\ref{eq:rank2-4}) and (\ref{eq:rank2-5}), we obtain
\begin{eqnarray}
 && |p|^2=|r|^2 = 2c - |a|^2, \ \ \ \ \ \ |q|^2=|s|^2 = 2c-|b|^2, \label{eq:a}\\
 && a \bar{p} = \bar{a}r, \ \ \ \ \ \ b\bar{q} = \bar{b}s, \label{eq:b} \\
 && p\bar{q} = r\bar{s} = -a\bar{b}, \ \ \ \ \ \ a\bar{q}=\bar{b}r, \ \ \ \ \ \ a\bar{s}=\bar{b}p.   \label{eq:c}
\end{eqnarray}

Consider the row vectors $(a,p)$ and $(b,q)$ in ${\mathbb C}^2$. Equation (\ref{eq:a}) says that they have length $\sqrt{2c}>0$. These two vectors are perpendicular to each other because $a\bar{b}+p\bar{q}=0$ from the first equality of (\ref{eq:c}). Therefore these two vectors cannot be proportional to each other, namely, $aq-bp\neq 0$.

If $a=0$, then by the first equality of (\ref{eq:a}) we have $pr\neq 0$. From the first equality in (\ref{eq:c}), we then get $q=0$.  So by the second equality in (\ref{eq:a}) we know that $b\neq 0$. Now the  second equality of (\ref{eq:c}), $a\bar{q}=\bar{b}r$, would give us a contradiction.

If $a\neq 0$, then the first equality of (\ref{eq:b}) gives us $r=(a\bar{p})/\bar{a}$. Substituting this into the second equality of (\ref{eq:c}) gives $aq-bp=0$, a contradiction.

The above discussion indicates that the assumption $c>0$ always leads to a contradiction, so we must have $c=0$. By (\ref{eq:rank2tr}), we conclude that  $E=F=0$, hence $T=0$, violating the $r_B=2$ assumption. So for $n=4$ the $r_B=2$ case cannot occur. This completes the proof of the proposition.
\end{proof}

\begin{proposition} \label{proprank3}
Let $(M^4,g)$ be a compact balanced BTP manifold of dimension $4$ with constant Chern holomorphic sectional curvature and with $r_B= 3$. Then $g$ is Chern flat. Furthermore, the universal cover of $(M^4,g)$ is holomorphically isometric to the product $(\mbox{SL}(2,{\mathbb C}), \lambda g_{\kappa}) \times ({\mathbb C},g_0)$, where $g_0$ is the flat metric on ${\mathbb C}$, $\lambda >0$ is a constant, and $g_{\kappa}$ is the standard Killing metric.
\end{proposition}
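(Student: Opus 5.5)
Choose a local unitary frame $e$ with $\mbox{ker}(B)=\mbox{span}\{e_4\}$, so $T^4_{\ast\ast}=0$. By Lemma \ref{lemmakernelB}, $R^b_{i\bar{j}k\bar{4}}=0$ for all $i,j$ and all $k\in\{1,2,3\}$. We will combine this vanishing with the explicit formula (\ref{eq:Rb}) in the same way as in the proof of Proposition \ref{proprank2}. The remaining torsion components are of two kinds. The first kind is $T^j_{ik}$ with $i,j,k\le 3$; it forms a trace-free triple in the $3$-dimensional part because of balancedness, which gives $T^1_{1k}+T^2_{2k}+T^3_{3k}=0$. The second kind is the mixed components $T^j_{i4}$ with $i,j\le 3$, which we collect into a $3\times 3$ matrix $F$. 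Balancedness with $k=4$ gives $\mbox{tr}(F)=0$.

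\textbf{Step 1: kill the mixed torsion.} We extract the identities obtained from (\ref{eq:Rb}) with $\ell=4$ by choosing the index $j\in\{4\}$ or $j\le 3$, and $i=4$ or $i\le 3$, exactly as in (\ref{eq:rank2-1})--(\ref{eq:rank2-5}). With $j=4$ and $i\le 3$ we get relations of the form $\sum_r T^r_{ik}\overline{T^r_{44}}=0$, which are trivial. With $i=4$ and $j\le3$ we get a relation analogous to (\ref{eq:rank2-4}), of the form $2cI_3=3F^\ast F-2FF^\ast+(\text{terms quadratic in the }3\text{-dim torsion})$. The remaining choices give linear-algebraic constraints between $F$ and the $3$-dimensional torsion. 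The goal is to show $F=0$. I would first take traces to relate $\|F\|^2$ to $c$ and to the $3$-dimensional torsion. I would then use (\ref{eq:stab}) with $\ell=4$, or Lemma \ref{lemmastabS} with $S$ diagonalized as in Lemma \ref{lemmadiag3}, to show that $F$ commutes appropriately with the $3$-dimensional torsion and must vanish.

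\textbf{Step 2: reduce to dimension $3$.} Once $F=0$, the torsion lives entirely on $W=\mbox{span}\{e_1,e_2,e_3\}$ and $T(e_4,\cdot)=0$. The conditions from Lemma \ref{lemmakernelB} then force $R^b$ to split, and in particular $R^b_{i\bar j 4\bar 4}$ is computed directly from (\ref{eq:Rb}). Taking $i=j=k=\ell=4$ gives $R^b_{4\bar44\bar4}=c$. Taking $k=\ell=4$ and $i=j\le 3$ gives $R^b_{i\bar i4\bar4}=c/2$. Compatibility with the block structure of Lemma \ref{lemmakernelB} then forces $c=0$. Note that $e_4\in\ker B$ while $e_i\in(\ker B)^\perp$, and the symmetry $R^b_{i\bar jk\bar\ell}=R^b_{k\bar\ell i\bar j}$ gives $R^b_{4\bar4 i\bar i}=R^b_{i\bar i4\bar4}$; this combination should make the contradiction explicit. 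With $c=0$, the $3$-dimensional torsion on $W$ is a rank-$3$ balanced BTP torsion, and (\ref{eq:stab}) restricted to $W$ reproduces the algebraic situation of balanced BTP threefolds with $H=0$. For such threefolds, the normal form of \cite{ZhaoZ25} (as used in \cite{ChenZ26}) gives $T^j_{ik}=\lambda'\epsilon_{ikj}$ in a suitable frame, i.e.\ the structure constants of $\mathfrak{sl}(2,\mathbb C)$. Putting this into (\ref{eq:Rb}) then gives $R^b$, hence the Chern curvature $R$, identically zero.

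\textbf{Step 3: global conclusion.} Since $g$ is Chern flat and compact, Boothby's theorem shows that the universal cover is a complex Lie group with a left-invariant metric. Since $g$ is BTP, \cite{PodestaZ} shows that the group is reductive and that the metric is a product of Killing metrics on the simple factors and a flat factor. In complex dimension $4$ with $r_B=3$, the only possibility is $\mbox{SL}(2,\mathbb C)\times\mathbb C$ with the metric $\lambda g_\kappa\times g_0$.

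The main obstacle is Step 1, proving $F=0$. This is a cubic overdetermined system in roughly $27$ unknowns and has no shortcut analogous to the $2\times2$ computation in Proposition \ref{proprank2}. I would first use Lemma \ref{lemmastabS} in a frame diagonalizing $S$ and $B$ to sort the torsion components by eigenvalue sums, and then apply the trace identities before attempting a case analysis.
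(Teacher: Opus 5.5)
Your Step 1, showing $F=0$, is the heart of the proposition, and the proposal does not prove it. It only outlines a plan and asserts there is no shortcut, but you have misjudged the structure. Because $T^4_{\ast\ast}=0$, the component $R^b_{4\bar jk\bar4}$ ($j,k\le 3$) in (\ref{eq:Rb}) contains no three-dimensional torsion. The terms $T^4_{kr}\overline{T^4_{jr}}$, $T^4_{4r}\overline{T^k_{jr}}$ and $T^j_{kr}\overline{T^4_{4r}}$ vanish, and the surviving sums involve only $T^\ast_{\ast4}$. So Lemma \ref{lemmakernelB} gives exactly $2cI_3=3F^\ast F-2FF^\ast$, whose trace is $6c=\|F\|^2$. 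This settles $c=0$ at once and shows $c\ge 0$.

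For $c>0$ you need an ingredient absent from your plan. The matrix $D=(R^b_{4\bar4i\bar j})_{i,j\le3}$ is again expressed through $F$ alone, $D=\frac14(2cI-2FF^\ast+F^\ast F)$. Moreover $R^b_{4\bar44\bar4}=c$ and $R^b_{4\bar44\bar j}=0$ for $j\le3$, so (\ref{eq:stab}) with $i=j=q=4$ and $p,\ell\le3$ gives $FD-DF-cF=0$. Use the first identity to write $D=cI-\frac12F^\ast F=\frac{2c}{3}I-\frac13FF^\ast$. Then $FD-DF-cF=0$ becomes $FF^\ast F=-4cF$, so $F^\ast F(F^\ast F+4cI)=0$. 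Hence $F=0$, and then $c=0$, a contradiction. Sorting components by eigenvalues of $S$ via Lemma \ref{lemmastabS}, or commuting $F$ with the three-dimensional torsion, is not where the information lies.

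Step 2 also has two errors. First, your derivation of $c=0$ does not work. Lemma \ref{lemmakernelB} kills only $R^b_{\ast\bar\ast k\bar4}$ with $k\le3$ (and, by symmetry, $R^b_{k\bar4\ast\bar\ast}$). It says nothing about $R^b_{4\bar44\bar4}=c$ or $R^b_{i\bar i4\bar4}=R^b_{4\bar4i\bar i}=c/2$, and these values are mutually consistent. Once $F=0$ comes from the identity above, $c=0$ is automatic anyway.

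Second, substituting $T^j_{ik}=\lambda'\epsilon_{ikj}$ into (\ref{eq:Rb}) with $c=0$ does not make $R^b$ vanish. One gets $R^b_{i\bar jk\bar\ell}=|\lambda'|^2(\delta_{ik}\delta_{j\ell}-\delta_{i\ell}\delta_{jk})$ for $i,j,k,\ell\le3$, as it must be, since a non-K\"ahler balanced metric cannot be Bismut flat. To reach Chern flatness you need either the BTP relation expressing $R$ algebraically in terms of $R^b$ and $T$, or the paper's route. In the paper's route, $T^\ast_{\ast4}=T^4_{\ast\ast}=0$ gives $\nabla^ge_4=\nabla^be_4\in\mathbb Ce_4$. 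So $\widetilde M=N^3\times\mathbb C$ with $N$ balanced, BTP, $r_B=3$ and $c=0$, and the local rank-three argument of \cite{ZhaoZ25} makes $N$ Chern flat. Your Step 3 (Boothby together with \cite{PodestaZ}) agrees with the paper.
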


\begin{proof}
Let $e$ be a local unitary frame so that $e_4$ spans the kernel of $B$. We have $T^4_{\ast \ast }=0$. Let $E=(E_{ij})=(T^j_{i4})_{1\leq i,j\leq 3}$ be the corresponding $3\times 3$ matrix.  By (\ref{eq:Rb}) we have
\begin{equation*}
 R^b_{4 \bar{j} k\bar{4}}= \frac{c}{2}\delta_{kj} - \frac{1}{2}\sum_r T^r_{4k} \overline{ T^r_{j4}} - \frac{3}{4} \sum_r T^j_{4r} \overline{ T^k_{4r}}, \ \ \ \ \ \ \ \forall \ 1\leq j,k\leq 3,
\end{equation*}
On the other hand, for any $1\leq k\leq 3$, since $e_4\in \mbox{ker}(B)$ and $e_k \perp \mbox{ker}(B)$, by Lemma \ref{lemmakernelB} we get $R^b_{\ast \bar{\ast} k\bar{4}}=0$, hence $R^b_{4 \bar{j} k\bar{4}}=0$ for any $1\leq j,k\leq 3$ which in matrix form is
\begin{equation} \label{eq:EE}
 2cI_3 \,= \,3 E^{\ast}E - 2E E^{\ast}.
\end{equation}
Taking the trace in the above line, we get $6c = \, \parallel\!E\!\parallel^2\ \geq 0$. When $c=0$, we get $E=0$. In the following let us assume that $c>0$.

Again by (\ref{eq:Rb}) we have $R^b_{4 \bar{4} 4\bar{4}}=c$, $R^b_{4 \bar{4} 4\bar{j}}=0$ for any $1\leq j\leq 3$, and
$$  R^b_{4 \bar{4} i\bar{j}}= \frac{c}{2}\delta_{ij} - \frac{1}{2}\sum_r T^r_{i4} \overline{T^r_{j4} } + \frac{1}{4}\sum_r T^j_{4r} \overline{T^i_{4r} }, \ \ \ \ \ \ \forall \ 1\leq i,j\leq 3. $$
Let $D=(D_{ij})=(R^b_{4\bar{4}i\bar{j}})_{1\leq i,j\leq 3}$. Then the above equation becomes
\begin{equation} \label{eq:D}
 D \,= \,\frac{1}{4}\big( 2cI - 2E E^{\ast}+  E^{\ast}E\big) .
\end{equation}
Applying (\ref{eq:stab}) with $i=j=q=4$ and $p,\ell<4$ gives
$$ 0 = \sum_r R^{b}_{4\bar{4}r\bar{\ell}}T^r_{p4} - \sum_r R^{b}_{4\bar{4}p\bar{r}}T^{\ell}_{r4} - \sum_s R^{b}_{4\bar{4}4\bar{s}}T^{\ell}_{ps}. $$
Since $R^b_{4\bar{4}4\bar{j}}=0$ for any $j<4$, in the above equality the index $r<4$ and $s=4$, so we have
\begin{equation} \label{eq:DE}
 ED - DE - cE  =0.
\end{equation}
By (\ref{eq:EE}),  $EE^{\ast}$ and $E^{\ast}E$ can be expressed in terms of each other. Plugging this into (\ref{eq:D}) gives
$$ D = cI - \frac{1}{2}E^{\ast }E = \frac{2c}{3}I - \frac{1}{3} EE^{\ast}. $$
Therefore by (\ref{eq:DE}) we get
$$ 0= E\big(cI - \frac{1}{2}E^{\ast }E\big) - \big( \frac{2c}{3}I - \frac{1}{3} EE^{\ast}\big) E - cE = -\frac{2c}{3}E - \frac{1}{6}EE^{\ast}E. $$
Multiplying the last equality from the left by $E^\ast$ gives
$$
E^\ast E(E^\ast E+4cI)=0.
$$
Since $E^\ast E$ is non-negative Hermitian and $c>0$, this implies that $E^\ast E=0$, hence $E=0$. Now by (\ref{eq:EE}) we get $c=0$, contradicting the assumption that $c>0$. So $c$ must be $0$ and $E=0$.

We have proved that $T^{\ast}_{\ast 4}=0$, and $T^4_{\ast \ast}=0$ by our choice of $e_4$. We claim that $L:=\mbox{ker}(B)= {\mathbb C}e_4$ will be preserved under the Levi-Civita connection $\nabla^g$. To see this, recall that under any local unitary frame $e$ with dual coframe $\varphi$ we always have (see for instance \cite{YangZ} or \cite{ZhaoZ24})
$$ \nabla^ge_i = \nabla^be_i - \frac{1}{2}\sum_{j,k} \big( T^j_{ik}\varphi_k - \overline{T^i_{jk} } \overline{\varphi}_k \big) e_j + \frac{1}{2} \sum_{j,k}T^k_{ij} \overline{\varphi}_k \overline{e}_j . $$
Hence in our case $\nabla^ge_4 = \nabla^be_4 = \theta^b_{44}e_4 \in L$. Therefore $L$ gives a flat de Rham factor and the universal cover of $M^4$ is holomorphically isometric to the product $N^3\times {\mathbb C}$.

We now show that the induced metric on $N^3$ is also balanced and BTP.  Since all torsion components involving the index \(4\) vanish, the torsion of \(M\) restricts exactly to the torsion of \(N\).  The Bismut connection also splits, so the induced metric on \(N\) is BTP.  The balanced condition restricts as well, because for \(1\le a\le3\),
$$
\sum_{b=1}^3T^b_{ba}=\sum_{b=1}^4T^b_{ba}=0.
$$
Moreover, the \(B\)-tensor of \(N\) is the restriction of \(B\) to the \(N\)-factor, hence \(r_B(N)=3\). The induced metric on \(N\) is balanced and BTP, and its Chern holomorphic sectional curvature is still \(c=0\). Since \(r_B(N)=3\), the local rank-three case in the proof of \cite[Proposition 2.4, Case 2]{ZhaoZ25} applies to \(N\); this part of the argument does not use compactness and implies that \(N\) is Chern flat. The product Chern connection on \(N\times {\mathbb C}\) is the direct sum of the Chern connections of the two factors, so \(\widetilde M\), and hence \(M\), is Chern flat. Applying the compact Chern-flat BTP classification \cite[Theorem 1.2 and Theorem 3.4]{PodestaZ} to the original compact fourfold \(M\), and using \(r_B=3\), we obtain
$$
\widetilde M\cong(\mbox{SL}(2,{\mathbb C}),\lambda g_{\kappa})\times({\mathbb C},g_0)
$$
for some \(\lambda>0\). This completes the proof of the proposition.
\end{proof}

Propositions \ref{proprank1}, \ref{proprank2}, and \ref{proprank3} prove Theorem \ref{thm1} in the $r_B<4$ case. In the next two sections, we will rule out the possibility of $r_B=4$.

\vspace{0.3cm}

\section{Balanced BTP fourfolds with \texorpdfstring{$r_B=4$ and $S\neq 0$}{rB = 4 and S != 0}}

The goal of this section is to rule out the possibility of $r_B=4$ and $S\neq 0$ in Theorem \ref{thm1}, as stated in the following proposition:

\begin{proposition} \label{prop4a}
Let $(M^4,g)$ be a balanced BTP manifold with constant Chern holomorphic sectional curvature $c$. Then the following case cannot occur: $r_B=4$ and $S\neq 0$.
\end{proposition}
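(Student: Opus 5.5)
We argue by contradiction: assume $r_B=4$ and $S\neq 0$. By Lemma \ref{lemmadiag3}, choose a local unitary frame $e$ under which $B$, $A$ and $S$ are all diagonal. Write $S_{i\bar i}=s_i$ and $B_{i\bar i}=b_i>0$; the positivity holds because $r_B=4$. By Lemma \ref{lemmaS}, $s_i=\frac{c}{2}\cdot 5+\frac14(b_i-a_i)$, and by (\ref{eq:traceS}) we have $\sum_i s_i=10c$.

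The basic tool is Lemma \ref{lemmastabS}: $T^j_{ik}\neq 0$ forces $s_j=s_i+s_k$. Since $b_j=\sum_{r,s}|T^j_{rs}|^2>0$, for each $j$ there is some pair $(i,k)$ with $i\neq k$ and $s_j=s_i+s_k$. Balancedness gives $\sum_k T^k_{ki}=0$, so components of the form $T^k_{ki}$ are either all zero for fixed $i$ or at least two are nonzero. A nonzero $T^k_{ki}$ forces $s_i=0$.

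The plan is a combinatorial case analysis on the multiset $\{s_1,\dots,s_4\}$, which is not all zero. We order the values and note the following. If $s_j$ is the largest value and is positive, it must be a sum $s_i+s_k$ with $i\neq k$. If $s_j$ is the smallest value and is negative, then the same reasoning applies with signs reversed. Iterating this, a nonzero value $s_j$ must be generated by other values. With only four indices, the possibilities are limited to a few patterns, roughly:
\begin{enumerate}
\item $\{0,0,t,t\}$;
\item $\{0,t,t,2t\}$;
\item $\{t,t,2t,\ldots\}$, and the like.
\end{enumerate}
In each pattern, Lemma \ref{lemmastabS} kills most torsion components. For instance, in pattern (2) the component $T^4_{ik}$ can only be nonzero for $\{i,k\}$ among the indices with $s=t$, or for $\{i,k\}$ consisting of the $0$- and $2t$-indices. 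Each pattern thus leaves a small number of possibly nonzero torsion entries.

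For each surviving pattern we then impose the remaining constraints:
\begin{enumerate}
\item the diagonal form of $s_i$ from Lemma \ref{lemmaS}, together with the diagonality of $A$ and $B$, which kills off-diagonal quadratic expressions;
\item the trace identity $\sum_i s_i=10c$;
\item the positivity $b_i>0$ for every $i$;
\item the block structure of Lemma \ref{lemmakernelB}-type arguments. Here we use that $\Theta^b$ commutes with $B$ from (\ref{eq:BTheta}), so $R^b_{i\bar j k\bar\ell}=0$ whenever $b_k\neq b_\ell$, and $R^b$ is computed from Lemma \ref{lemmaRb}.
\end{enumerate}
Finally, we apply the full equation (\ref{eq:stab}) with selected indices, as in the proofs of Propositions \ref{proprank2} and \ref{proprank3}. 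This produces matrix identities of the type $EE^*E=-\lambda E$ with $\lambda\ge 0$. From $E^*E(E^*E+\lambda I)=0$ we get $E=0$, which either forces $b_j=0$ for some $j$ (contradicting $r_B=4$) or forces $S=0$.

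We expect the main obstacle to be the patterns that allow many nonzero torsion components, where the linear constraints from Lemma \ref{lemmastabS} are weakest. An example is the case where three equal values $t$ coexist with $s=2t$ or with the value $0$. There the cubic system (\ref{eq:stab}) must be handled directly. For these patterns we would first try to use the $B$-block structure: group the indices by the eigenvalues $b_i$, use Lemma \ref{lemmaRb} to express $R^b_{4\bar4 i\bar j}$ as a quadratic matrix expression in the torsion blocks, and run the argument of Proposition \ref{proprank3} to obtain a definite-sign identity that forces some block to vanish. Consistency of the $s_i$ with $\sum_i s_i=10c$ and with $c$ computed from traces, as in (\ref{eq:rank2tr}), should then give the contradiction.
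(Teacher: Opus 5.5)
Your proposal has a genuine gap. The case reduction is incorrect, and the tool you plan to use to close the cases does not exist when $r_B=4$.

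\textbf{The reduction to a few patterns is wrong.} Lemma \ref{lemmastabS} does not cut the eigenvalues of $S$ down to a short list. As soon as some $s_\ell=0$, the component $T^j_{j\ell}$ is compatible with (\ref{eq:stabS}) for every $j$, so your requirement that each nonzero $s_j$ be ``generated'' by other values is automatically satisfied.

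A careful sign analysis along the lines you sketch yields exactly one thing: $S$ must have a zero eigenvalue. You never state or prove this, and it is the first step of the paper's argument. After that, the only further combinatorial constraint is that each index $\ell$ with $s_\ell=0$ admits a pair $p\neq q$ with $s_p+s_q=0$, since $b_\ell>0$. What survives are three continuous families:
\begin{itemize}
\item $\operatorname{diag}(a,b,-a,0)$ with $a>0$ and $b\neq 0$;
\item $\operatorname{diag}(0,0,x,y)$;
\item $\operatorname{diag}(0,0,0,x)$.
\end{itemize}
Only the special case $x=y$ of the middle family is on your list. Conversely, $\{0,t,t,2t\}$ and the ``three equal values with $0$ or $2t$'' configurations you flag as the main obstacle are excluded immediately. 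They fail either $b_\ell>0$ at the zero index or the sign argument.

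\textbf{The proposed closing mechanism is unavailable.} You want an identity $EE^{\ast}E=-\lambda E$ obtained as in Propositions \ref{proprank2} and \ref{proprank3}. Those identities come from Lemma \ref{lemmakernelB}, which needs $\ker B\neq 0$ and is vacuous when $r_B=4$. You also never say what $E$ would be in any case. The paper uses a specific device for each family.
\begin{itemize}
\item For $\operatorname{diag}(a,b,-a,0)$: there is a torsion entry that is the only nonzero one with its upper index, namely $T^3_{34}$ if $b>0$ and $T^1_{14}$ if $b<0$. Then (\ref{eq:stab}) forces $R^b_{i\bar j4\bar4}=0$. But Lemma \ref{lemmaRb} gives $R^b_{4\bar44\bar4}=c$, so $c=0$, contradicting $\operatorname{tr}S=b\neq 0$.
\item For $\operatorname{diag}(0,0,0,x)$: first rotate inside the $0$-eigenspace so that $T^4_{14}$ is the only entry involving the index $4$. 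The same mechanism kills $R^b_{i\bar j1\bar1}$. Further stabilizer equations, together with balancedness, then give the contradiction.
\item For $\operatorname{diag}(0,0,x,y)$ with $x+y\neq 0$: the only entries with upper index $1$ or $2$ are $T^1_{12}$ and $T^2_{12}$. Hence the $\{1,2\}$-block of $B$ has rank one, contradicting $r_B=4$.
\item For $\operatorname{diag}(0,0,x,y)$ with $x+y=0$: a rotation making $T^2_{34}=0$ lets (\ref{eq:stab}) kill first $R^b_{i\bar j1\bar2}$ and then $R^b_{i\bar j1\bar1}$. Evaluating $R^b_{2\bar21\bar1}=-\tfrac14\big(|T^1_{12}|^2+|T^2_{12}|^2\big)$ finishes the case.
\end{itemize}
None of these arguments, nor anything that could replace them, appears in your proposal. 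As it stands it is a strategy outline, not a proof.
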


Here $S\neq 0$ means that the first Bismut Ricci curvature $S$ is not identically zero. In the following we will assume  that $(M^4,g)$ is a balanced BTP fourfold with constant Chern holomorphic sectional curvature.  Assume, for a contradiction, that $r_B=4$ and $S\neq 0$. By (\ref{eq:S}),
$$ S = \frac{5c}{2}I + \frac{1}{4}(B-A), $$
where $A_{i\bar{j}} = \sum_{r,s} T^r_{is} \overline{T^r_{js}} $ and $B_{i\bar{j}} = \sum_{r,s} T^j_{rs} \overline{T^i_{rs}} $. Hence $\mbox{tr}(S)=10c$. Since $\nabla^bS=0$, the eigenvalues of $S$ are all constants. Also, by Lemma \ref{lemmadiag3}, there exists a local unitary frame $e$ so that $A$, $B$, and $S$ are simultaneously diagonal. Under such a frame, one has (\ref{eq:stabS}):
\begin{equation}  \label{eq:stabS2}
 \big(S_{i\bar i}+S_{k\bar k}-S_{j\bar j}\big)T^j_{ik}=0,
\qquad 1\le i,j,k\le4.
\end{equation}

\vspace{0.25cm}

\noindent {\bf Claim 1.} $S$ must have at least one zero eigenvalue.
\begin{proof}
Denote by $\lambda_1\geq \lambda_2\geq \lambda_3\geq  \lambda_4$ the four eigenvalues of $S$. We want to show that at least one of them must be zero. Assume the contrary. Then one of the following cases occurs; we rule them out in turn.

\vspace{0.1cm}
(1) $\lambda_4>0$ or $\lambda_3> 0 > \lambda_4$.

In this case, for any $1\leq p<q\leq 4$, we have $\lambda_p+(\lambda_q-\lambda_4)\geq \lambda_p>0$, so by (\ref{eq:stabS2}) we have $T^4_{pq}=0$ for any $p$, $q$, thus $e_4$ lies in the kernel of $B$, contradicting the assumption that $r_B=4$.

\vspace{0.1cm}
(2) $0>\lambda_1$ or $\lambda_1 > 0>\lambda_2$.

In this case we have for any $1\leq p<q\leq 4$ that
$\lambda_p+\lambda_q-\lambda_1 = (\lambda_p -\lambda_1) + \lambda_q \leq \lambda_q <0$, hence $T^1_{pq}=0$ for any $p$, $q$, violating $r_B=4$.

\vspace{0.1cm}
(3) $\lambda_2> 0 > \lambda_3$.

Let $1\leq p<q\leq 4$. If $q>2$, we have $\lambda_p+\lambda_q-\lambda_1\leq \lambda_q <0$. If $q=2$, then $p=1$, and we have $\lambda_p+\lambda_q-\lambda_1= \lambda_1+\lambda_2-\lambda_1 = \lambda_2 >0$. So by (\ref{eq:stabS2}) we always have $T^1_{pq}=0$, a contradiction. This completes the proof of the claim.
\end{proof}

\vspace{0.25cm}

\noindent {\bf Claim 2.} $S$ cannot have exactly one zero eigenvalue.
\begin{proof}
Assume on the contrary that the Bismut Ricci tensor $S$ has exactly one zero eigenvalue. Let $S=\mbox{diag}\{ x,y,z,0\}$ with $xyz\neq 0$.  Since \(r_B=4\), the matrix \(B\) is positive definite.  In particular, some component \(T^4_{pq}\) is nonzero.  Applying \eqref{eq:stabS2} to this component gives
$$
S_{p\bar p}+S_{q\bar q}=0.
$$
Here neither \(p\) nor \(q\) can be \(4\), since otherwise one of \(x,y,z\) would be zero.  Thus two of the three nonzero eigenvalues of \(S\) are opposite.  After relabeling the first three basis vectors, we may write
$$
S=\operatorname{diag}(a,b,-a,0),\qquad a>0,\quad b\neq0.
$$
Put \(\lambda=(\lambda_1,\lambda_2,\lambda_3,\lambda_4)=(a,b,-a,0)\).  By \eqref{eq:stabS2}, a component \(T^j_{ik}\) can be nonzero only if
$$
\lambda_i+\lambda_k-\lambda_j=0.
$$

We now consider the two possible signs of \(b\).  If \(b>0\), then the only lower pair whose eigenvalue sum is \(-a\) is \((3,4)\).  Hence the only possible component with upper index \(3\) is \(T^3_{34}\).  Since \(B_{3\bar3}>0\), we must have
$$
T^3_{34}\neq0.
$$
Applying the stabilizer identity \eqref{eq:stab} to \((R^b\cdot T)^3_{34}\), we get
$$
0 \ =\ R^b_{i\bar j3\bar3}T^3_{34}
-R^b_{i\bar j3\bar3}T^3_{34}
-R^b_{i\bar j4\bar4}T^3_{34} \ = \ -R^b_{i\bar j4\bar4}T^3_{34}.
$$
Therefore \(R^b_{i\bar j4\bar4}=0\) for all \(i,j\).

If \(b<0\), then the only lower pair whose eigenvalue sum is \(a\) is \((1,4)\).  Hence the only possible component with upper index \(1\) is \(T^1_{14}\).  Since \(B_{1\bar1}>0\), we have
$$
T^1_{14}\neq0.
$$
Applying \eqref{eq:stab} to \((R^b\cdot T)^1_{14}\), we get
$$
0 \ = \ R^b_{i\bar j1\bar1}T^1_{14}
-R^b_{i\bar j1\bar1}T^1_{14}
-R^b_{i\bar j4\bar4}T^1_{14}\ = \ -R^b_{i\bar j4\bar4}T^1_{14}.
$$
Thus again \(R^b_{i\bar j4\bar4}=0\) for all \(i,j\).

In either case, \(R^b_{4\bar44\bar4}=0\).  On the other hand, \eqref{eq:stabS2} gives \(T^4_{4r}=0\) for every \(r\): for \(r=1,2,3\) the coefficient is \(S_{r\bar r}\neq0\), and \(T^4_{44}=0\) by skew-symmetry.  Therefore, by \eqref{eq:Rb},
$$
R^b_{4\bar44\bar4} \ = \ c-\sum_r|T^4_{4r}|^2 \ = \ c.
$$
So \(c=0\).  But then the trace identity gives \(\operatorname{tr}S=10c=0\), whereas
$$
\operatorname{tr}S=a+b-a+0=b\neq0.
$$
This contradiction proves the claim.
\end{proof}

\vspace{0.25cm}

\noindent {\bf Claim 3.} $S$ cannot have exactly three zero eigenvalues.
\begin{proof}
Assume on the contrary that  $S$ has exactly three zero eigenvalues. Without loss of generality, we may suppose that $S=\mbox{diag}\{ 0,0,0,x\}$ with $x\neq 0$ under a suitable local unitary frame $e$. We have $10c=\mbox{tr}(S)=x\neq  0$. By (\ref{eq:stabS2}), we know that the only possibly non-zero torsion components are $T^j_{ik}$ and $T^4_{i4}$ for $1\leq i,j,k\leq 3$. By taking  a unitary change in $\{ e_1,e_2, e_3\}$, we may assume that $T^4_{24}=T^4_{34}=0$. So $T^4_{14}\neq 0$ as otherwise $e_4$ will belong to the kernel of $B$. The component $T^4_{14}$ is the only non-trivial torsion component involving the index $4$. Below we will repeatedly use the identity (\ref{eq:stab}), which will be denoted as $(R\cdot T)^{\ell}_{pq}$. First consider $(R\cdot T)^{4}_{14}$, which gives us $R^b_{i\bar{j}1\bar{1}}T^4_{14}=0$. Hence $R^b_{i\bar{j}1\bar{1}}=0$ for any $i$, $j$. Write $a=T^4_{14}$. By (\ref{eq:Rb}), we have
$$ R^b_{1\bar{1}1\bar{1}}=c-\sum_r |T^1_{1r}|^2, \ \ \ R^b_{4\bar{4}1\bar{1}}= \frac{c}{2} - \frac{1}{4} |a|^2.$$
So $2c=|a|^2>0$ and $|T^1_{12}|^2 + |T^1_{13}|^2 =c >0$. Next,  let us compute by (\ref{eq:Rb}) that
\begin{eqnarray*}
R_{4\bar{1}}& := &  R^b_{1\bar{4}4\bar{1}} \ = \  0\,,\\
 R_{4\bar{2}}& := &  R^b_{1\bar{4}4\bar{2}} \ = \  \frac{a}{4} \overline{T^1_{12} }  \,,\\
 R_{4\bar{3}}& := &  R^b_{1\bar{4}4\bar{3}} \ = \ \frac{a}{4} \overline{T^1_{13} }\,.
\end{eqnarray*}
Meanwhile, applying (\ref{eq:stab}) to $(R\cdot T)^j_{24}$ and $(R\cdot T)^j_{34}$ for $1\leq j\leq 3$, we obtain
$$ T^j_{12}R_{4\bar{1}} =  T^j_{23}R_{4\bar{3}},\ \ \ \ \ \  \ \ \ \ T^j_{13}R_{4\bar{1}} =  -T^j_{23}R_{4\bar{2}}.$$
Substituting the above values for $R_{4\bar{1}}$, $R_{4\bar{2}}$, and $R_{4\bar{3}}$, we obtain $T^j_{23} \overline{T^1_{12}} = T^j_{23} \overline{T^1_{13}} = 0$. Since $T^1_{12}$ and $T^1_{13}$ cannot both be zero, $T^j_{23}=0$ for each $1\leq j\leq 3$. On the other hand, by the balanced assumption on the metric, we have
$$ T^1_{12}=-T^3_{32}=0, \ \ \ \ \ \ T^1_{13} = -T^2_{23}=0, $$
which is a contradiction. This shows that the case when $S$ has exactly three zero eigenvalues cannot occur, and the claim is proved.
\end{proof}

\vspace{0.25cm}

\noindent {\bf Claim 4.} $S$ cannot have exactly two zero eigenvalues.
\begin{proof}
Assume on the contrary that  $S$ has exactly two zero eigenvalues. Without loss of generality, we may suppose that $S=\mbox{diag}\{ 0,0,x,y\}$ with $xy\neq 0$ under a suitable local unitary frame $e$. We will divide the discussion into two cases, depending on whether the trace $10c=x+y$ is zero or not.

\vspace{0.15cm}

\noindent {\em Case 1:} $x+y\neq 0$.

In this case by (\ref{eq:stabS}) we know that the only possibly non-zero torsion components are given below:
$$ T^1_{12}, \ T^2_{12}, \ T^3_{i3}, \ T^4_{i4},\ \ \mbox{and} \ T^3_{i4}, \ T^4_{i3} \ \ \mbox{when} \ \ x=y. $$
Here $1\leq i\leq 2$, and $T^1_{12}$ and $T^2_{12}$ are non-zero since $r_B=4$. By definition, we have
$$ B_{1\bar{1}}=\sum_{r,s=1}^4|T^1_{rs}|^2 = 2 |T^1_{12}|^2, \ \ \ \ B_{2\bar{2}} = 2 |T^2_{12}|^2, \ \ \ \ B_{1\bar{2}} = 2 T^2_{12} \overline{T^1_{12} } . $$
Therefore we have $B_{1\bar{1}} B_{2\bar{2}} = |B_{1\bar{2}}|^2$. But this contradicts the assumption $r_B=4$, which means that $B$ is positive definite. Hence this case does not occur.

\vspace{0.15cm}

\noindent {\em Case 2:} $x+y= 0$.

In this case again by (\ref{eq:stabS}) we know that the only possibly non-zero torsion components are given below:
$$ T^1_{12}, \ T^2_{12}, \ T^3_{13}, \ T^3_{23}, \ T^4_{14},\ T^4_{24}, \  T^1_{34}, \ T^2_{34}. $$
Keeping $e_3$ and $e_4$ fixed, we may make a unitary change of $\{e_1,e_2\}$ so that $T^2_{34}=0$.  Note that now $T^1_{34}\neq 0$, as otherwise a linear combination of $e_1$ and $e_2$ would lie in the kernel of $B$, violating the $r_B=4$ assumption. Also, $T^2_{12}\neq 0$ as otherwise $e_2$ would be in the kernel of $B$. Applying (\ref{eq:stab}) to $(R\cdot T)^2_{34}$ gives $R^b_{i\bar{j}1\bar{2}}T^1_{34}=0$, hence $R^b_{i\bar{j}1\bar{2}}=0$ for any $i$, $j$. Applying the same identity to $(R\cdot T)^2_{12}$ and using $R^b_{i\bar{j}1\bar{2}}=0$ gives $R^b_{i\bar{j}1\bar{1}}T^2_{12}=0$, hence $R^b_{i\bar{j}1\bar{1}}=0$ for any $i$, $j$. On the other hand, since $c=0$, by (\ref{eq:Rb}) we compute
\begin{eqnarray*}
 R^b_{2\bar{2}1\bar{1}} & = & \sum_{r=1}^4 \big( -\frac{1}{2}  |T^r_{12}|^2 +\frac{1}{4} |T^1_{2r}|^2 + \frac{1}{4} |T^2_{1r}|^2 -\frac{3}{4} T^1_{1r} \overline{ T^2_{2r}} -\frac{3}{4} T^2_{2r} \overline{ T^1_{1r}}  \big) \\
 & = & - \frac{1}{4}|T^1_{12}|^2 - \frac{1}{4}|T^2_{12}|^2.
 \end{eqnarray*}
Therefore we obtain $T^1_{12}=T^2_{12}=0$, contradicting our assumption that $T^2_{12}\neq 0$. This concludes the proof of the case and the claim.
\end{proof}

Together, Claims 1--4 prove Proposition \ref{prop4a}. It remains to deal with the case $r_B=4$ while $S=0$ identically. This will be treated in the next section.

\vspace{0.3cm}

\section{Balanced BTP fourfolds with \texorpdfstring{$r_B=4$ and $S=0$}{rB = 4 and S = 0}}

We now rule out the case $r_B=4$ and $S=0$, completing the proof of Theorem \ref{thm1}. More precisely, we have the following proposition:

\begin{proposition} \label{prop4b}
Let $(M^4,g)$ be a balanced BTP manifold with constant Chern holomorphic sectional curvature. Then the following case cannot occur: $r_B=4$ and $S= 0$.
\end{proposition}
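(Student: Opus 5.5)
Assume, for a contradiction, that $r_B=4$ and $S\equiv 0$. The first consequences come for free. By (\ref{eq:traceS}), $10c=\mbox{tr}(S)=0$, so $c=0$. By (\ref{eq:S}), $S=0$ then forces $A=B$. Lemma \ref{lemmastabS} is empty here, since every coefficient vanishes, so the eigenvalue bookkeeping of \S 4 is unavailable. Instead we work with the full identity (\ref{eq:stab}), with $R^b$ given by (\ref{eq:Rb}) at $c=0$, i.e.\ as a pure quadratic in $T$. The plan is to exploit traced versions of (\ref{eq:stab}) and the positivity of $B$ to force $T=0$, contradicting $r_B=4$.

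\textbf{Step 1: diagonalize.} By Lemma \ref{lemmadiag3}, choose a unitary frame with $A=B=\mbox{diag}(b_1,\dots,b_4)$, where all $b_i>0$ are constants because $\nabla^b B=0$. Equation (\ref{eq:BTheta}) makes $\Theta^b$ block diagonal according to the distinct eigenvalues of $B$. Lemma \ref{lemmakernelB} is vacuous since $\ker B=0$, so this block structure is the substitute. Within each block of $R^b$, any $\ell$-index of $T$ lying in a different eigenspace from the $r$-index is killed. I will split into cases by the eigenvalue multiplicity pattern of $B$: $(4)$, $(3,1)$, $(2,2)$, $(2,1,1)$, $(1,1,1,1)$.

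\textbf{Step 2: traced identities.} In (\ref{eq:stab}), contract $\ell$ with $p$, or multiply by $\overline{T^\ell_{pq}}$ and sum over all indices. This gives scalar identities of the form $\sum R^b_{i\bar j r\bar s}(\cdot)=0$. Combined with the symmetry $R^b_{i\bar jk\bar\ell}=R^b_{k\bar\ell i\bar j}$ and with (\ref{eq:Rb}), they should express $\sum_{i,j}|(R^b_{i\bar j}\cdot T)|^2$ as a quartic in $T$ which has a sign after using $A=B$. The model is Proposition \ref{proprank1}, where one reached $EE^\ast E=0$. Concretely, I would compute $\sum_{i}R^b_{i\bar i k\bar \ell}=S_{k\bar\ell}=0$ and the second trace $\sum_k R^b_{i\bar jk\bar k}=0$. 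Then I would look for the quantity $Q=\sum_{i,j,k,\ell}|R^b_{i\bar jk\bar\ell}|^2$, or a combination of $\mbox{tr}(A^2)$ and $\|T\|^4$-type invariants, and show that (\ref{eq:stab}) forces $Q$ to equal an expression that is $\le 0$, hence $R^b=0$.

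\textbf{Step 3: $R^b=0$ implies $T=0$.} If $R^b=0$, then (\ref{eq:Rb}) with $c=0$ is a quadratic identity in $T$. Setting $i=j$, $k=\ell$ and summing yields a negative definite combination: the analogue of the computation for $R^b_{2\bar21\bar1}$ in Claim 4 gives $-\frac14\sum|T|^2$-type terms. This should force $T=0$, contradicting $r_B=4$. If a sign argument is not available globally, I fall back on the block cases of Step 1, mimicking Claims 2--4. In each block pattern, I pick a nonzero component $T^\ell_{pq}$ guaranteed by $b_\ell>0$, use (\ref{eq:stab}) to kill rows of $R^b$, and then evaluate a diagonal entry $R^b_{i\bar ik\bar k}$ via (\ref{eq:Rb}) to get a negative sum of squares.

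\textbf{Main obstacle.} I expect the hardest part to be Step 2, or equivalently the multiplicity-$(4)$ case $B=bI$. There no block splitting helps, and the cubic system (\ref{eq:stab}) must be attacked directly, possibly by using the $\mathrm{SU}(4)$-freedom to normalize $T$ (e.g.\ make one $T^\ell$ matrix block-canonical as a skew form) and then grinding through components.
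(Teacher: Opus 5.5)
\textbf{There is a genuine gap.} Your setup agrees with the paper: $c=0$, $A=B$, constant eigenvalues of $B$, block-diagonal $\Theta^b$ via (\ref{eq:BTheta}), and the five multiplicity patterns. After that, however, the proposal contains no argument that reaches a contradiction in any of the cases.

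\textbf{Step 2 is vacuous.} The traced identities you propose carry no information. Contracting $\ell=p$ in (\ref{eq:stab}) gives $0=0$ identically: the first two sums cancel after relabeling, and the third is $-\sum_r R^b_{i\bar jq\bar r}\eta_r=0$. Pairing with $\overline{T^{\ell}_{pq}}$ gives $\sum_{k,\ell}R^b_{i\bar jk\bar\ell}(B-2A)_{\ell\bar k}=0$, which for $A=B=\lambda I$ is just $S=0$ again. So in exactly the case you call hardest these identities say nothing, and nothing supports the hoped-for bound $\sum|R^b|^2\le 0$.

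\textbf{Step 3 uses the wrong contraction.} With $c=0$ and $\eta=0$, $\sum_{i,k}R^b_{i\bar ik\bar k}=\mbox{tr}(S)$ vanishes identically as a polynomial in $T$, so it can never be negative definite. The sign-definite trace is the other one, $\sum_{i,k}R^b_{i\bar kk\bar i}=-\sum_{i,k,r}|T^r_{ik}|^2$.

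\textbf{The fallback to \S4 does not transfer.} In Claims 2--4 of \S4, (\ref{eq:stabS}) first made $T$ sparse enough that a single instance of (\ref{eq:stab}) isolates one curvature row. With $S=0$ that sparsity is absent.

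\textbf{What is missing.} You never need $R^b=0$; off-block vanishing of $\Theta^b$ plus a sign-definite partial trace suffices.
\begin{enumerate}
\item \emph{A simple eigenvalue.} If $e_1$ spans a simple eigenspace of $B$, then $\Theta^b_{1i}=0$, so $R^b_{i\bar11\bar i}=0$ for $i\ge2$. Feed this into (\ref{eq:Rb}) for $R^b_{1\bar1i\bar i}$, sum over $i$, and use $S_{1\bar1}=0$ and $A_{1\bar1}=B_{1\bar1}$. This yields $B_{1\bar1}=\sum_r|T^1_{1r}|^2$, whereas by definition $B_{1\bar1}=2\sum_r|T^1_{1r}|^2+\sum_{r,s\ge2}|T^1_{rs}|^2$. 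Hence $T^1=0$, which disposes of the patterns $(3,1)$, $(2,1,1)$ and $(1,1,1,1)$ at once.
\item \emph{The pattern $(2,2)$.} Take eigenspaces spanned by $P=\{1,2\}$ and $Q=\{3,4\}$. Then $\Theta^b_{i\alpha}=0$ gives $R^b_{i\bar\alpha\alpha\bar i}=0$. Sum over $i\in P$, $\alpha\in Q$ and use $\sum_{i\in P}T^i_{ir}=-\sum_{\alpha\in Q}T^\alpha_{\alpha r}$. Then (\ref{eq:Rb}) becomes a sum of non-positive terms, forcing $T=0$.
\item \emph{The scalar case $B=\lambda I$.} Here the block structure has to be manufactured. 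Normalize $B=2I$; the forms $\alpha_j=\sum_{i<k}T^j_{ik}\varphi_i\wedge\varphi_k$ are then orthonormal in $\Lambda^2$. Using $A=B$, a unitary frame $X,Y$ of their rank-two orthogonal complement satisfies $XX^{\ast}+YY^{\ast}=I$. Choosing $X$ degenerate (a quadratic condition on the pencil) forces $X$ and $Y$, in a suitable frame, to be unimodular multiples of $E\oplus0$ and $0\oplus E$. Orthogonality then gives $T^{\ast}_{12}=T^{\ast}_{34}=0$. A further normalization reaches $T^1_{23}=T^2_{14}=T^3_{24}=T^4_{13}=1$. Then $dT^j_{12}=0$ forces $\theta^b_{i\alpha}=0$ for $i\in\{1,2\}$, $\alpha\in\{3,4\}$, so $\Theta^b_{i\alpha}=0$ and the argument of the $(2,2)$ case finishes.
\end{enumerate}
Your suggestion to put one $T^\ell$ into skew-normal form does not lead there. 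The object to normalize is the complement of $\mathrm{span}\{\alpha_j\}$, and without these ingredients the plan does not reach a contradiction.
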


In the following we will assume that $(M^4,g)$ is a balanced BTP manifold with constant Chern holomorphic sectional curvature $c$. Assume, for a contradiction, that $r_B=4$ and $S=0$. By (\ref{eq:S}) and (\ref{eq:traceS}) we have $c=0$ and $A=B$. Let $e$ be a local unitary frame so that $B$ is diagonal under $e$. We first prove the following claim:

\vspace{0.4cm}

\noindent {\bf Claim 1:} $B$ cannot have any eigenvalue with multiplicity $1$.
\begin{proof} Assume on the contrary that $B$ does have an eigenvalue with multiplicity $1$. Without loss of generality, we may assume that $e_1$ corresponds to that eigenvalue. The assumption means that $B$ is diagonal and $B_{1\bar{1}}\neq B_{i\bar{i}}$ for $2\leq i\leq 4$. By (\ref{eq:BTheta}), we know that $\big( B_{i\bar{i}} - B_{j\bar{j}}\big) \Theta^b_{ij} =0$ holds for any $i$, $j$. So in our case we have $\Theta^b_{1i}=0$ for any $2\leq i\leq 4$. That is, $R^b_{\ast \bar{\ast}1\bar{i}}=0$. Since $c=0$, by (\ref{eq:Rb}) we have
$$ 0 = R^b_{i\bar{1}1\bar{i}} = \sum_{r=1}^4 \big\{ \frac{1}{2} |T^r_{1i}|^2 + \frac{1}{4} (T^1_{1r} \overline{T^i_{ir}} + T^i_{ir} \overline{T^1_{1r}})  -\frac{3}{4} ( |T^i_{1r}|^2 + |T^1_{ir}|^2 ) \big\}.$$
Therefore
 $$ \sum_r (T^1_{1r} \overline{T^i_{ir}} + T^i_{ir} \overline{T^1_{1r}}) = 3 \sum_r ( |T^i_{1r}|^2 + |T^1_{ir}|^2 ) - 2\sum_r |T^r_{1i}|^2, \ \ \ \ \ \ \forall \ 2\leq i\leq 4.
 $$
Substituting this into the expression for $R^b_{1\bar{1}i\bar{i}}$ in (\ref{eq:Rb}), we obtain
\begin{eqnarray}
R^b_{1\bar{1}i\bar{i}} & = & \sum_{r=1}^4 \big\{ -\frac{1}{2} |T^r_{1i}|^2 - \frac{3}{4} (T^1_{1r} \overline{T^i_{ir}} + T^i_{ir} \overline{T^1_{1r}})  +\frac{1}{4} ( |T^i_{1r}|^2 + |T^1_{ir}|^2 ) \big\} \nonumber \\
& = & \sum_{r=1}^4 \big\{ -\frac{1}{2} |T^r_{1i}|^2 - \frac{3}{4} \big( 3( |T^i_{1r}|^2 + |T^1_{ir}|^2 ) - 2 |T^r_{1i}|^2 \big)   +\frac{1}{4} ( |T^i_{1r}|^2 + |T^1_{ir}|^2 ) \big\} \nonumber \\
& = & \sum_{r=1}^4 \big\{  |T^r_{1i}|^2 - 2 ( |T^i_{1r}|^2 + |T^1_{ir}|^2 ) \big\}, \ \ \ \ \ \ \ \ \ \ \ \ \  \ \ \ \ \ \ \ \ \ \ \ \ \ \forall \ 2\leq i\leq 4. \label{eq:bb}
\end{eqnarray}
Let us write $a:=\sum_r|T^1_{1r}|^2$. Then by (\ref{eq:Rb}) again we have $R^b_{1\bar{1}1\bar{1}}=-a$, while by definition, $B_{1\bar{1}}=\sum_{r,s} |T^1_{rs}|^2$ and $A_{1\bar{1}}=\sum_{r,s} |T^r_{1s}|^2$, so if we sum over $i$ from $2$ to $4$ in (\ref{eq:bb}) and use the fact that $S=0$, we obtain
$$ a=-R^b_{1\bar{1}1\bar{1}} = \sum_{i=2}^4 R^b_{1\bar{1}i\bar{i}}  = A_{1\bar{1}} -2(A_{1\bar{1}}-a) -2(B_{1\bar{1}}-a) =4a-3B_{1\bar{1}}.$$
In the last equality, we used the fact that $A=B$. So we have $B_{1\bar{1}}=a$. On the other hand, by definition,
$$ B_{1\bar{1}} =\sum_{r,s=1}^4 |T^1_{rs}|^2 = 2\sum_{r=1}^4|T^1_{1r}|^2 + \sum_{r,s=2}^4 |T^1_{rs}|^2 = 2a + \sum_{r,s=2}^4 |T^1_{rs}|^2. $$
Hence $B_{1\bar{1}}=a$ implies that $T^1_{rs}=0$ for any $r$, $s$, which leads to $B_{1\bar{1}}=0$, a contradiction. This completes the proof of the claim.
\end{proof}

Since $B$ has no eigenvalue of multiplicity $1$ and the dimension is $4$, $B$ has either two distinct eigenvalues, each of multiplicity $2$, or a single eigenvalue of multiplicity $4$. In the latter case, $B=\lambda I$ is a scalar multiple of the identity. We consider these two cases separately.

\vspace{0.4cm}

\noindent {\bf Claim 2:} $B$ cannot have two distinct eigenvalues, each with multiplicity $2$.

\begin{proof}
Assume on the contrary that $B$ does have two distinct eigenvalues, each with multiplicity $2$. Choose a unitary frame $e$ so that $B=\mbox{diag}\{ x,x,y,y\}$, where $x>y>0$. Set $P=\{1,2\}$ and $Q=\{3,4\}$. For $1\leq a\leq 4$, write $u_a=\sum_{i=1}^2 T^i_{ia}$. Then by the assumption that the metric is balanced, we have
$$ u_1 = - T^2_{12}, \ \ u_2 = T^1_{12}, \ \ u_3=T^4_{34}, \ \ u_4=-T^3_{34}. $$
In particular, $u_r=-\sum_{\alpha =3}^4 T^{\alpha}_{\alpha r}$. Since $x\neq y$, by (\ref{eq:BTheta}) we have $\Theta^b_{i\alpha}=0$ for any $i\in P$ and any $\alpha \in Q$. Let us denote by $i'$ the element in $P$ other than $i$, and by $\alpha'$ the element in $Q$ other than $\alpha$. Then by (\ref{eq:Rb}) we have
\begin{eqnarray*}
 0 & = &  R^b_{i\bar{\alpha} \alpha \bar{i}} \ = \ \sum_{r=1}^4 \big\{ \frac{1}{2} |T^r_{i\alpha}|^2 + \frac{1}{4} \big( T^i_{ir} \overline{T^{\alpha}_{\alpha r} } + T^{\alpha}_{\alpha r} \overline{T^i_{ir}} \big) -\frac{3}{4} \big( |T^{\alpha}_{ir}|^2 + |T^i_{\alpha r}|^2 \big) \big\} \\
 & = & \frac{1}{2}\big\{ |T^i_{i\alpha}|^2 + |T^{i'}_{i\alpha}|^2 + |T^{\alpha}_{i\alpha}|^2+|T^{\alpha'}_{i\alpha}|^2 \big\} + \frac{1}{4} \sum_{r=1}^4 \big\{ T^i_{ir} \overline{T^{\alpha}_{\alpha r} } + T^{\alpha}_{\alpha r} \overline{T^i_{ir}} \big\} \,+ \\
 & & - \frac{3}{4} \big\{ |T^{\alpha}_{ii'}|^2 + |T^{\alpha}_{i\alpha}|^2 + |T^{\alpha}_{i\alpha'}|^2 + |T^i_{i\alpha }|^2  + |T^i_{i'\alpha }|^2  + |T^i_{\alpha \alpha'}|^2 \big\} \\
 & = & \frac{1}{2}\big\{  |T^{i'}_{i\alpha}|^2 + |T^{\alpha'}_{i\alpha}|^2 \big\} -\frac{1}{4}  \big\{ |T^i_{i\alpha}|^2 +  |T^{\alpha}_{i\alpha}|^2  \big\} + \frac{1}{4} \sum_{r=1}^4 \big\{ T^i_{ir} \overline{T^{\alpha}_{\alpha r} } + T^{\alpha}_{\alpha r} \overline{T^i_{ir}} \big\} \,+ \\
 & & - \frac{3}{4} \big\{  |T^{\alpha}_{i\alpha'}|^2 + |T^i_{i'\alpha }|^2  \big\} - \frac{3}{4} \big\{ |T^{\alpha}_{12}|^2     + |T^i_{34}|^2 \big\} \\
 & = & \big\{ \frac{1}{2}  |T^{i'}_{i\alpha}|^2 -  \frac{3}{4} |T^i_{i'\alpha }|^2  \big\} + \big\{ \frac{1}{2}  |T^{\alpha'}_{i\alpha}|^2  -  \frac{3}{4} |T^{\alpha}_{i\alpha'}|^2 \big\} -\frac{1}{4}  \big\{ |T^i_{i\alpha}|^2 +  |T^{\alpha}_{i\alpha}|^2  \big\}\\
 & & - \frac{3}{4} \big\{ |T^{\alpha}_{12}|^2     + |T^i_{34}|^2 \big\} + \frac{1}{4} \sum_{r=1}^4 \big\{ T^i_{ir} \overline{T^{\alpha}_{\alpha r} } + T^{\alpha}_{\alpha r} \overline{T^i_{ir}} \big\}.
 \end{eqnarray*}
If we sum over $i\in P$ and $\alpha\in Q$, then the last term becomes $-\frac{1}{2}\sum_r |u_r|^2$, while the other brackets all remain non-positive, so each bracketed sum must be zero. This actually gives us $T=0$, which contradicts the assumption that $r_B=4$, so the claim is proved.
\end{proof}

For the proof of Proposition \ref{prop4b}, after the above two claims the only case left is when $B$ has one eigenvalue of multiplicity $4$, or equivalently, when $B$ is a scalar multiple of the identity. So Proposition \ref{prop4b} will be proved after we prove the following proposition:

\begin{proposition} \label{prop4c}
Let $(M^4,g)$ be a balanced BTP manifold with constant Chern holomorphic sectional curvature. Then the following case cannot occur: $S= 0$ and $B=\lambda I$ for some constant $\lambda >0$.
\end{proposition}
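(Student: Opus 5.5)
Assume, for a contradiction, that $S=0$ and $B=\lambda I$ with $\lambda>0$. Then $c=0$, and by Lemma~\ref{lemmaS} also $A=B=\lambda I$. Thus under any unitary frame the torsion satisfies
\[
\sum_{r,s}T^j_{rs}\overline{T^i_{rs}}=\sum_{r,s}T^r_{is}\overline{T^r_{js}}=\lambda\delta_{ij},
\qquad
\sum_k T^k_{ki}=0 .
\]
With $c=0$, formula (\ref{eq:Rb}) expresses $R^b$ as a quadratic form in $T$. Since $B$ is now scalar, no splitting of $\Theta^b$ comes from (\ref{eq:BTheta}), and $S=0$ makes (\ref{eq:stabS}) vacuous. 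So the whole argument has to come from the full cubic system (\ref{eq:stab}), together with the symmetry $R^b_{i\bar jk\bar\ell}=R^b_{k\bar\ell i\bar j}$.

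The idea is to view $T$ at a point as a vector in $\Lambda^2V^*\otimes V$ with $V=\mathbb C^4$, and to read (\ref{eq:stab}) as saying that each $R^b_{i\bar j}$, regarded as an endomorphism of $V$, lies in the stabilizer algebra $\mathfrak h=\{X\in\mathfrak{gl}(V): X\cdot T=0\}$. The first step is to decompose $R^b$ along this algebra. Because $R^b$ is pair-symmetric and its curvature values lie in $\mathfrak h\cap\mathfrak u(4)$, the endomorphisms $R^b_{i\bar j}$ span an ideal of that compact algebra. Tracing $R^b_{i\bar j}$ gives $S=0$, so the span lies in $\mathfrak{su}(4)\cap\mathfrak h$. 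I would then compute the natural contraction
\[
\sum_{i,j,k,\ell}|R^b_{i\bar jk\bar\ell}|^2
\]
in two ways. Once directly from (\ref{eq:Rb}), as a quartic in $T$. Once by contracting (\ref{eq:stab}) against $\overline{T}$, as was done in Lemma~\ref{lemmakernelB} and in Claim~1 of the preceding section. The aim is an identity of the form $\|R^b\|^2 = -\kappa\lambda^2$ with $\kappa>0$. A key ingredient is an identity such as
\[
\sum_{i,j} R^b_{i\bar j k\bar\ell}\,(\ldots)=\text{multiple of } \lambda^2 ,
\]
obtained by contracting (\ref{eq:stab}) with $\overline{T^{\ell}_{pq}}$ and summing over $\ell,p,q$. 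This contraction yields
\[
\sum_{r} R^b_{i\bar j r\bar r}\lambda-2\sum_r R^b_{i\bar j r\bar r}\lambda\cdot(\ldots)=0,
\]
which gives scalar relations and forces certain partial traces to vanish.

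If a direct norm identity does not close, the fallback is a case analysis on the stabilizer. Since $\nabla^bT=0$, the holonomy of $\nabla^b$ lies in the compact group $H=\mathrm{Stab}(T)\cap U(4)$. Moreover $A=B=\lambda I$ says that the map $\Lambda^2V\to V$, $e_r\wedge e_s\mapsto T(e_r,e_s)$, is a multiple of a coisometry, and that each $T(e_i,\cdot)$ has the same Hilbert--Schmidt norm. I would try to show that $\mathfrak h\cap\mathfrak{su}(4)$ must then be small. Generic such $T$ have trivial stabilizer, which gives $R^b=0$. In that case (\ref{eq:Rb}) with $c=0$ yields a quadratic system in $T$, and its diagonal entries $R^b_{i\bar i i\bar i}=-\sum_r|T^i_{ir}|^2$ together with $R^b_{i\bar ik\bar k}=0$ should force $T=0$. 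This would run along the lines of Claim~1, where summing a chosen family of identities produced a sum of non-positive terms. The remaining possibility is a nontrivial stabilizer. There I would diagonalize a nonzero element $X=R^b_{i\bar j}$ of $\mathfrak h$ with eigenvalues $\mu_k$, so that $(\mu_i+\mu_k-\mu_j)T^j_{ik}=0$, exactly as in (\ref{eq:stabS}). Then the eigenvalue-pattern arguments of Claims~1--4 of the previous section apply, now with $\sum\mu_k=0$. Each pattern either puts a vector into $\ker B$ or forces some off-diagonal entry of $B$ to be nonzero, contradicting $B=\lambda I$.

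The main obstacle is the second branch. I must make sure that some $R^b_{i\bar j}$ is a nonzero traceless Hermitian endomorphism that can be diagonalized in a frame compatible with the frame in which $T$ is analysed. I also need to handle the eigenvalue patterns of $\mu$, which are no longer constrained by the vanishing of a Ricci-type tensor. I expect to replace the missing constraint by the conditions $A=B=\lambda I$ on the allowed torsion components and by the balanced condition, and to check the few resulting patterns case by case.
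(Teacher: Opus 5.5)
Your first strategy gives nothing. Contract \eqref{eq:stab} with $\overline{T^{\ell}_{pq}}$ and sum over $\ell,p,q$. The three terms become $\sum_{r,\ell}R^b_{i\bar j r\bar\ell}B_{\ell\bar r}$ and twice $\sum_{p,r}R^b_{i\bar j p\bar r}A_{r\bar p}$, i.e.\ $\lambda S_{i\bar j}-2\lambda S_{i\bar j}$. This vanishes identically because $A=B=\lambda I$ and $S=0$. Nothing else in the proposal supports an identity $\|R^b\|^2=-\kappa\lambda^2$, so drop this line.

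Your fallback, however, does give a complete proof, different from the paper's, once it is set up correctly at a single point. The right dichotomy is $R^b=0$ versus $R^b\neq0$; genericity of $T$ and the ``ideal'' remark play no role.

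If $R^b\neq0$, polarization gives a unit $v$ with $X=(R^b_{v\bar v k\bar\ell})_{k,\ell}\neq0$. This $X$ is Hermitian, has trace $S_{v\bar v}=0$, and satisfies \eqref{eq:stab}. In a unitary frame diagonalizing $X$ you get $(\mu_i+\mu_k-\mu_j)T^j_{ik}=0$ with $\sum_k\mu_k=0$. The cases then go as follows.
\begin{enumerate}
\item No zero eigenvalue: the argument of Claim~1 in Section~4 excludes it.
\item Exactly one zero, say $\mu_4=0$: some $T^4_{pq}\neq0$ forces $\mu_p=-\mu_q$ with $p,q\le3$, and the trace then forces a second zero.
\item Three zeros: the trace forces $X=0$.
\item The pattern $\mu=(0,0,x,-x)$: this is not excluded by producing a vector in $\ker B$ or a non-zero off-diagonal entry of $B$, as you predicted. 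Instead, the argument of Claim~4, Case~2 of Section~4 applies verbatim, since it uses only the torsion pattern, \eqref{eq:stab} for all $R^b_{i\bar j}$, and \eqref{eq:Rb} with $c=0$. Your idea of using $A=B=\lambda I$ also works here. These conditions make $\begin{pmatrix} T^3_{13} & T^3_{23}\\ T^4_{14} & T^4_{24}\end{pmatrix}$ equal to $\sqrt{\lambda/2}$ times a unitary matrix, and they give $|T^1_{12}|^2+|T^2_{12}|^2=\lambda/2$. Balancedness gives $T^1_{12}=T^3_{23}+T^4_{24}$ and $T^2_{12}=-(T^3_{13}+T^4_{14})$, which makes that sum equal to $\lambda$, a contradiction.
\end{enumerate}

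In the branch $R^b=0$, the equations you single out do not suffice. The sum $\sum_{i,k}R^b_{i\bar i k\bar k}=\operatorname{tr}S$ carries no sign information. Moreover, the torsion of $\mathfrak{sl}(2,\mathbb C)\times\mathbb C$ satisfies $R^b_{i\bar i i\bar i}=R^b_{i\bar i k\bar k}=0$ for all $i,k$ without vanishing. Use instead
\[
\sum_{i,k}R^b_{i\bar k k\bar i}\;=\;-\sum_{i,k,r}|T^r_{ik}|^2+\frac12\sum_r|\eta_r|^2 \qquad (c=0),
\]
which forces $T=0$ for a balanced metric.

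Compared with the paper: there, Claims~A and~B use only $A=B=2I$ and balancedness to bring $T$, on a smaller open set, to the normal form $T^1_{23}=T^2_{14}=T^3_{24}=T^4_{13}=1$. This uses a decomposable unit vector in $W_1^{\perp}\subset\Lambda^2T^*M$ and smooth frame adjustments. Then $\nabla^bT=0$ with constant components gives $\theta^b_{i\alpha}=0$, hence $\Theta^b_{i\alpha}=0$ for $i\in\{1,2\}$, $\alpha\in\{3,4\}$, and summing $R^b_{i\bar\alpha\alpha\bar i}$ finishes.

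Your route is pointwise, avoids the normal form and the smoothness bookkeeping, and reuses the eigenvalue analysis of Section~4 with a traceless curvature endomorphism in place of $S$. With the Claim~4 argument for the last pattern, it uses only $S=0$, $r_B=4$ and balancedness. It would therefore settle all of Proposition~\ref{prop4b} without Claims~1 and~2 of Section~5. The paper's route, in exchange, yields an explicit classification of the torsion tensors with $A=B=\lambda I$, which is structural information about balanced BTP fourfolds of independent interest.
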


\begin{proof}
Let $(M^4,g)$ be a balanced BTP manifold with constant Chern holomorphic sectional curvature. Assume that  $S=0$ and $B=\lambda I$ for a constant $\lambda >0$. We will derive a contradiction. After rescaling the metric if necessary, we may assume that $B=2I$. We first claim the following:

\vspace{0.2cm}

\noindent {\bf Claim A:} There exists a non-empty open subset $U\subset M^4$ and a unitary frame $e$ in $U$ such that
\begin{equation} \label{eq:12-34=0}
T^{\ast}_{12}=T^{\ast}_{34}=0.
\end{equation}

Start with any local unitary frame $e$ on an open subset $U_0\subset M^4$. We will make a unitary change of $e$ on a smaller open subset $U\subset U_0$ so that (\ref{eq:12-34=0}) holds. Denote by $\varphi$ the coframe dual to $e$, and consider the rank $6$ bundle $W=\Lambda^2(T\!M^{\ast})$ where $T\!M^{\ast}$ is the holomorphic cotangent bundle. The metric $g$ induces a metric $\langle , \rangle$ on $W$. For $1\leq j\leq 4$, denote by $\alpha_j=\sum_{1\leq i<k\leq 4} T^j_{ik}\,\varphi_i\wedge \varphi_k$ the local $(2,0)$-forms  which are sections of $W$. Note that at any point $p\in M$, $W_p$ can be identified with the space of skew-symmetric $4\times 4$ matrices, with the metric given by
$$ \langle X,\overline{Y} \rangle = \frac{1}{2}\mbox{tr}(XY^{\ast}),$$
where $X$ and $Y$ are skew-symmetric $4\times 4$ matrices and $Y^{\ast}$ denotes the conjugate transpose of $Y$. Since $B=2I$, each $\alpha_j$ is of unit length and they are mutually perpendicular to each other. So they span a rank $4$ subbundle $W_1\subset W$. Denote by $W_1^{\perp}$ the rank $2$ subbundle of $W$ which is the orthogonal complement of $W_1$. Pick two local $(2,0)$-forms
$$\alpha_5=\sum_{1\leq i<j\leq 4}  X_{ij}\varphi_i \wedge \varphi_j, \ \ \ \alpha_6= \sum_{1\leq i<j\leq 4} Y_{ij}\varphi_i \wedge \varphi_j,$$
where $X$ and $Y$ are smooth functions taking values in the skew-symmetric matrices, so that $\{ \alpha_5, \alpha_6\}$ forms a unitary frame of $W_1^{\perp}$. For $1\leq j\leq 4$, let $T^j=(T^j_{ik})$. Since $\{\alpha_1,\ldots,\alpha_6\}$ is a unitary frame of $W$, the $4\times 4$ matrix
\begin{equation} \label{eq:6sum}
 T^1(T^1)^{\ast} + T^2(T^2)^{\ast}+ T^3(T^3)^{\ast}+T^4(T^4)^{\ast}+XX^{\ast}+YY^{\ast}
 \end{equation}
is independent of the choice of unitary frames of $W$. If we choose the standard unitary frame $\{ \varphi_i\wedge \varphi_j\}_{1\leq i<j\leq 4}$ for $W$, we see that the matrix in (\ref{eq:6sum}) equals $3I_4$. Meanwhile, by the definition of the $A$ tensor, $A_{i\bar{j}} =\sum_{r,s=1}^4 T^r_{is} \overline{T^r_{js}}$, we see that the sum of the first four terms in (\ref{eq:6sum}) is equal to $A$, which is $2I_4$ since $S=0$ implies $A=B$. Therefore we have
\begin{equation} \label{eq:XY}
XX^{\ast}+YY^{\ast} = I_4,
\end{equation}
for any unitary frame $\{\alpha_5, \alpha_6\}$ of $W_1^{\perp}$ in the open neighborhood $U_0\subset M$. We now claim that there is a non-empty open subset $U\subset U_0$ on which smooth functions $a$ and $b$ can be chosen such that $|a|^2+|b|^2=1$ and $\tilde{\alpha}_5\wedge \tilde{\alpha}_5=0$ identically in $U$, where $\tilde{\alpha}_5 = a\alpha_5 +b\alpha_6$. To see this, note that
\begin{equation} \label{eq:q}
 \tilde{\alpha}_5\wedge \tilde{\alpha}_5 = (a^2 q_{X\!X} + 2ab \,q_{X\!Y} + b^2q_{YY})\,\varphi_1\wedge \varphi_2\wedge \varphi_3\wedge \varphi_4,
 \end{equation}
where $q_{X\!X}$ is the function defined by $\alpha_5\wedge \alpha_5 = q_{X\!X}\,\varphi_1\wedge \varphi_2\wedge \varphi_3\wedge \varphi_4$ and $q_{X\!Y}$, $q_{YY}$ are defined similarly. If $q_{X\!X}=0$ identically in $U_0$, then we may simply take $a=1$, $b=0$ in $U=U_0$, and the claim is satisfied. Assume that $q_{X\!X}$ is not identically zero. Let $U_1\subset U_0$ be a non-empty open subset in which $q_{X\!X}$ is nowhere zero. Consider the discriminant function $\Delta = (q_{X\!Y})^2- q_{X\!X} \,q_{YY}$. If $\Delta=0$ identically in $U_1$, then the quadratic polynomial on the right hand side of (\ref{eq:q}) is a perfect square $(af+hb)^2$ with $f$ being nowhere zero. In this case we may take $U=U_1$ and $a=tb$, $b=1/\sqrt{1+|t|^2}$, where $t=-\frac{h}{f}$, so the claim is satisfied. If $\Delta$ is not identically zero in $U_1$, take a non-empty open subset $U\subset U_1$ on which $\Delta$ is nowhere zero. On $U$, we can choose a smooth branch $\sqrt{\Delta}$ of the square root of $\Delta$ and set $a=tb$, $b=1/\sqrt{1+|t|^2}$, $t=(-q_{X\!Y}+\sqrt{\Delta})/q_{X\!X}$. This establishes the claim.

In summary, replacing $U_0$ by a smaller open subset $U$ if necessary, we can always find another unitary frame $\{ \tilde{\alpha}_5, \tilde{\alpha}_6\}$ of $W_1^{\perp}$ so that $ \tilde{\alpha}_5\wedge  \tilde{\alpha}_5=0$ identically. For simplicity, let us still denote them by $\alpha_5$ and $\alpha_6$, so now we have $\alpha_5\wedge  \alpha_5=0$, which means that the matrix $X$ has zero determinant everywhere in $U$.

Choose a new unitary frame $e$ in $U\subset M$ so that under $e$ the skew-symmetric matrix $X$ is block diagonal. Since its determinant is zero, it must be of the form
$$ X = \left[ \begin{array}{cc} \lambda E & 0 \\ 0 & 0 \end{array} \right], \ \ \  \ \mbox{where} \ \  E = \left[ \begin{array}{cc} 0 & 1 \\ -1 & 0 \end{array} \right] , $$
and $\lambda$ is a smooth function. Since $\frac{1}{2}\mbox{tr}(XX^{\ast})=|\alpha_5|^2 =1$, we see that $|\lambda|^2=1$.
This together with (\ref{eq:XY}) indicates that under the same $e$ we have
$$ Y = \left[ \begin{array}{cc} 0 & 0 \\ 0 & \mu E  \end{array} \right], $$
where $|\mu|^2 = 1$. Under this particular unitary frame $e$, for any $1\leq i\leq 4$, by the orthogonality condition $\langle T^i,\overline{X}\rangle = 0$ and  $\langle T^i,\overline{Y}\rangle = 0$, we get $T^i_{12}=T^i_{34}=0$ for any $1\leq i\leq 4$, so Claim A is proved.

\vspace{0.3cm}

\noindent {\bf Claim B:} There exists a non-empty open subset $U'\subset U$ and a unitary frame $e$ in $U'$ so that, up to skew-symmetry in the lower indices, the only non-zero torsion components under $e$ are
\begin{equation} \label{eq:T}
T^1_{23}=T^2_{14}=T^3_{24}=T^4_{13}=1.
\end{equation}

By Claim A, we have a non-empty open subset $U\subset M$ and a unitary frame $e$ in $U$ under which $T^{\ast}_{12}=T^{\ast}_{34}=0$. We will perform unitary changes on $\{ e_1, e_2\}$ and on $\{ e_3, e_4\}$ on a possibly smaller open subset $U'\subset U$ so that (\ref{eq:T}) holds. First let us write
\begin{eqnarray*}
&&  P^1=  \left[ \begin{array}{cc} T^1_{13} & T^1_{14} \\ T^1_{23} & T^1_{24} \end{array} \right]  = \left[ \begin{array}{c} ^t\!u \\ ^t\!v \end{array} \right] ,  \ \ \ \ \ \ P^2=  \left[ \begin{array}{cc} T^2_{13} & T^2_{14} \\ T^2_{23} & T^2_{24} \end{array} \right]  = \left[ \begin{array}{c} ^t\!w \\ -\,^t\!u \end{array} \right] ,\\
&& P^3=  \left[ \begin{array}{cc} T^3_{13} & T^3_{14} \\ T^3_{23} & T^3_{24} \end{array} \right]  = \ \left[  x ,y \right] ,  \ \ \ \ \ \ \  P^4=  \left[ \begin{array}{cc} T^4_{13} & T^4_{14} \\ T^4_{23} & T^4_{24} \end{array} \right]  = \ \left[  z , -x  \right] ,
\end{eqnarray*}
where $u$, $v$, $w$, $x$, $y$, and $z$ are smooth functions taking values in ${\mathbb C}^2$, written as column vectors. Let us also denote by $E_3=(T^j_{i3})$, $E_4=(T^j_{i4})$ the $2\times 2$ matrices where $1\leq i,j\leq 2$, and by $F_1=(T^{\beta}_{1\alpha})$, $F_2=(T^{\beta}_{2\alpha})$ the $2\times 2$ matrices where $3\leq \alpha , \beta \leq 4$, namely,
$$ E_3 = \left[ \begin{array}{cc} u_1 & w_1 \\ v_1 & -u_1 \end{array} \right], \ \ E_4 = \left[ \begin{array}{cc} u_2 & w_2 \\ v_2 & -u_2 \end{array} \right],\ \  F_1 = \left[ \begin{array}{cc} x_1 & z_1 \\ y_1 & -x_1 \end{array} \right], \ \ F_2 = \left[ \begin{array}{cc} x_2 & z_2 \\ y_2 & -x_2 \end{array} \right]. $$

\vspace{0.2cm}

We claim that at any point $p\in U$, the matrices $E_3$ and $E_4$ cannot be proportional to each other, and similarly, $F_1$ and $F_2$ cannot be proportional to each other.

To see this, assume on the contrary that $E_3$ is proportional to $E_4$ at $p$.  First of all, if $F_1$ is also proportional to $ F_2$ at $p$, then by performing a unitary change of $\{ e_3(p), e_4(p)\}$, we may assume that $E_4=0$. Similarly, by performing a unitary change of $\{ e_1(p), e_2(p)\}$, we may assume that $F_2=0$. That is, we may assume that $u_2=v_2=w_2=0$ and $x_2=y_2=z_2=0$, so we have
$$ P^1 = \left[ \begin{array}{cc} u_1 & 0 \\ v_1 & 0 \end{array} \right], \ \ P^2 = \left[ \begin{array}{cc} w_1 & 0 \\ -u_1 & 0 \end{array} \right],\ \  P^3 = \left[ \begin{array}{cc} x_1 & y_1 \\ 0 & 0 \end{array} \right], \ \ P^4 = \left[ \begin{array}{cc} z_1 & -x_1 \\ 0 & 0 \end{array} \right]. $$
Since $B_{4\bar{4}}>0$, we have $(z_1, x_1)\neq (0,0)$. By $B_{1\bar{3}}=B_{1\bar{4}}=0$, we get $u_1\overline{x}_1= u_1\overline{z}_1=0$, hence $u_1=0$. Similarly, by $B_{2\bar{3}}=B_{2\bar{4}}=0$ we get $w_1=0$. The equalities $u_1=w_1=0$ imply that $B_{2\bar{2}}=0$, a contradiction. So $F_1$ and $F_2$ cannot be proportional to each other at $p$.

As before, by a unitary rotation of $\{ e_3(p), e_4(p)\}$ we may assume that $E_4=0$, hence $u_2=v_2=w_2=0$. We may also rotate $\{ e_1(p), e_2(p)\}$ to make $x_2=0$, so we have
$$ P^1 = \left[ \begin{array}{cc} u_1 & 0 \\ v_1 & 0 \end{array} \right], \ \ P^2 = \left[ \begin{array}{cc} w_1 & 0 \\ -u_1 & 0 \end{array} \right],\ \  P^3 = \left[ \begin{array}{cc} x_1 & y_1 \\ 0 & y_2 \end{array} \right], \ \ P^4 = \left[ \begin{array}{cc} z_1 & -x_1 \\ z_2 & 0 \end{array} \right]. $$
If $x_1\neq 0$, then by $B_{1\bar{3}}=B_{2\bar{3}}=0$ we get $u_1=w_1=0$, which leads to $B_{2\bar{2}}=0$, a contradiction, so we must have $x_1=0$. Now by looking at $P^1$, $P^2$, and $P^4$, we see that their left columns are three non-zero column vectors in ${\mathbb C}^2$ that are mutually perpendicular to each other. This is impossible, proving the claim that at every point of $U$, $E_3$ is not proportional to $E_4$ and $F_1$ is not proportional to $ F_2$.

\vspace{0.2cm}

Next, let us perform a unitary change on $\{ e_3, e_4\}$ to assume that $\det(E_4)=0$. This may require us to shrink our open subset $U$ to ensure the smoothness of the new frame, but it can be achieved by finding smooth roots of quadratic equations in exactly the same way as in the proof of Claim A. Meanwhile, note that when we change the basis $\{ e_1, e_2\}$  by a unitary matrix $Z$, the matrix $E_4$  will be changed to $ZE_4Z^{\ast}$. So after the basis change in $\{ e_1, e_2\}$, we may assume that $E_4$ is upper triangular. Again here the change can be made smoothly by shrinking the open subset $U$ if needed.

Since  both the determinant and  the trace of $E_4$ are zero, the upper triangular matrix must be strictly upper triangular, hence $u_2=v_2=0$, and we have
$$ P^1 = \left[ \begin{array}{cc} u_1 & 0 \\ v_1 & 0 \end{array} \right], \ \ P^2 = \left[ \begin{array}{cc} w_1 & w_2 \\ -u_1 & 0 \end{array} \right],\ \  P^3 = \left[ \begin{array}{cc} x_1 & y_1 \\ x_2 & y_2 \end{array} \right], \ \ P^4 = \left[ \begin{array}{cc} z_1 & -x_1 \\ z_2 & -x_2 \end{array} \right]. $$
We also have $w_2\neq 0$, as otherwise $E_4=0$ which would violate the fact that $E_3$ and $E_4$ are not proportional. The left column of $P^1$ is a unit vector, and by $B_{1\bar{2}}=0$, we see that the left column of $P^2$ must be perpendicular to the left column of $P^1$, namely, there is a scalar valued function $a$ such that
$$ w_1= a\overline{v}_1, \ \ -u_1 =-a\overline{u}_1. $$
Since $B=2I$ and only mixed lower indices remain, $\|P^2\|^2=B_{2\bar{2}}/2=1$. Thus $|a|^2+|w_2|^2=1$, and $|a|^2<1$ because $w_2\neq 0$. By taking absolute values of both sides of the second equation in the above line, we conclude that $u_1=0$. Therefore $v_1\neq 0$, and by $B_{1\bar{3}}=B_{1\bar{4}}=0$, we know that $x_2=z_2=0$, so we have
$$ P^1 = \left[ \begin{array}{cc} 0 & 0 \\ v_1 & 0 \end{array} \right], \ \ P^2 = \left[ \begin{array}{cc} w_1 & w_2 \\ 0 & 0 \end{array} \right],\ \  P^3 = \left[ \begin{array}{cc} x_1 & y_1 \\ 0 & y_2 \end{array} \right], \ \ P^4 = \left[ \begin{array}{cc} z_1 & -x_1 \\ 0 & 0 \end{array} \right]. $$
The first rows of $P^2$ and $P^4$ are unit vectors perpendicular to each other. Now by $B_{3\bar{2}}=B_{3\bar{4}}=0$,  the first row of $P^3$ has to be perpendicular to both of them, so it must be zero, namely, we have $x_1=y_1=0$. Only the upper left entry of $P^4$ is non-zero. Since $B_{2\bar{4}}=0$, we also have $w_1=0$. Thus
$$ P^1 = \left[ \begin{array}{cc} 0 & 0 \\ v_1 & 0 \end{array} \right], \ \ P^2 = \left[ \begin{array}{cc} 0 & w_2 \\ 0 & 0 \end{array} \right],\ \  P^3 = \left[ \begin{array}{cc} 0 & 0 \\ 0 & y_2 \end{array} \right], \ \ P^4 = \left[ \begin{array}{cc} z_1 & 0 \\ 0 & 0 \end{array} \right]. $$
In other words, we may choose a unitary frame $e$ under which, up to skew-symmetry in the lower indices, the only non-zero torsion components are
$$ a_1:=T^1_{23}, \ \ \ a_2:=T^2_{14}, \ \ \ a_3:=T^3_{24}, \ \ \ a_4:=T^4_{13}, $$
with $|a_i|=1$ for each $i$. Choose smooth functions $\rho_i$ such that
$$  \rho_1^5=\overline{a}_1a_2a_3^2a_4^3, \ \ \ \ \ \rho_2=\overline{\rho}_1a_3a_4, \ \ \ \ \  \rho_3 = \rho_1^2 a_1\overline{a}_3\overline{a}_4, \ \ \ \ \ \rho_4 = \overline{\rho}_1^2a_2a_3a_4. $$
Then under the new unitary frame  $\{ \overline{\rho}_1e_1, \ldots , \overline{\rho}_4e_4\}$ all four of the above torsion components take the value $1$, namely, $T^1_{23}=T^2_{14}=T^3_{24}=T^4_{13}=1$, while all components other than these and their skew-symmetric counterparts are zero. So Claim B is proved.

\vspace{0.2cm}

Now we proceed with the proof of Proposition \ref{prop4c}. We will argue within the open subset where (\ref{eq:T}) holds. Since $T^j_{12}=0$ for any $j$, we have
$$ 0 = d T^j_{12} = \sum_{r=1}^4 \big\{ \theta^b_{1r} T^j_{r2} + \theta^b_{2r} T^j_{1r} \big\} = -\theta^b_{13} T^j_{23} -\theta^b_{14} T^j_{24} + \theta^b_{23} T^j_{13} +\theta^b_{24} T^j_{14}.$$
Taking $j=1,2,3,4$ in turn and using (\ref{eq:T}), we conclude that $\theta^b_{13}=\theta^b_{14}=\theta^b_{23}=\theta^b_{24}=0$. That is, if we write $P=\{ 1,2\}$ and $Q=\{3,4\}$, then we have $\theta^b_{i\alpha}=0$ for any $i\in P$ and for any $\alpha \in Q$. We could pursue this line further to eventually conclude that $R^b=0$, which will give us a contradiction as non-K\"ahler Bismut flat metrics cannot be balanced (see, for instance, \cite{WYZ}). Alternatively, since we have the constant Chern holomorphic sectional curvature assumption here, we could use (\ref{eq:Rb}) to draw a quicker contradiction. For any $i\in P$ and $\alpha \in Q$, the vanishing of $\theta^b_{i\alpha}$ implies that $\Theta^b_{i\alpha}=0$. Since $c=0$ and $T^p_{pq}=0$ for any $p$, $q$, by (\ref{eq:Rb}) we have
\begin{eqnarray*}
 0& = &  R^b_{i\bar{\alpha}\alpha \bar{i}} \ \ = \ \ \sum_r \big\{ \frac{1}{2} |T^r_{i\alpha}|^2 +\frac{1}{4}T^i_{ir} \overline{T^{\alpha}_{\alpha r} } +\frac{1}{4} T^{\alpha}_{\alpha r} \overline{ T^i_{ir}}  -\frac{3}{4} |T^i_{\alpha r}|^2 -  \frac{3}{4} |T^{\alpha}_{ir}|^2   \big\} \\
 & = & \frac{1}{2} \sum_{j\in P} |T^j_{i\alpha}|^2 + \frac{1}{2} \sum_{\beta \in Q} |T^{\beta}_{i\alpha}|^2 - \frac{3}{4}\sum_{j\in P} |T^i_{j \alpha }|^2 - \frac{3}{4} \sum_{\beta \in Q} |T^{\alpha}_{i\beta }|^2.
 \end{eqnarray*}
If we sum over $i\in P$ and $\alpha\in Q$, then we end up with
$$ 0 \ \, = \ \,-\frac{1}{4} \sum_{i,j\in P} \sum_{\alpha \in Q} |T^j_{i\alpha}|^2 -\frac{1}{4} \sum_{i\in P} \sum_{\alpha ,\beta \in Q} |T^{\alpha}_{i\beta }|^2. $$
Therefore $T^{\ast}_{i\alpha}=0$ for any $i\in P$ and any $\alpha \in Q$, hence $T=0$, a contradiction. This establishes Proposition \ref{prop4c}, which also completes the proof of Proposition \ref{prop4b}.
\end{proof}

Putting Propositions \ref{prop4a} and \ref{prop4b} together, we know that for any balanced BTP manifold of dimension $4$ with constant Chern holomorphic sectional curvature, the rank $r_B$ of the $B$ tensor must be less than $4$, and by Propositions \ref{proprank1} and \ref{proprank2}, $r_B$ cannot be $1$ or $2$, either. So when the metric is non-K\"ahler, the only possibility is $r_B=3$. In this case, under the compactness assumption, Proposition \ref{proprank3} says that the manifold $(M^4,g)$ has to be a compact quotient of the reductive complex Lie group $\mbox{SL}(2,{\mathbb C})\times {\mathbb C}$, equipped with (a constant multiple of) the product of the standard Killing metric with the flat Euclidean metric. This completes the proof of Theorem \ref{thm1}.

\vspace{0.3cm}

\noindent\textbf{Acknowledgments.}
The second author would like to thank Haojie Chen, Shuwen Chen, Xiaolan Nie, Kai Tang, Bo Yang, and Xiaokui Yang for their interest and helpful discussions. He is very grateful to Quanting Zhao for their long collaboration on BTP manifolds and numerous discussions.

\vspace{0.3cm}

\noindent\textbf{Declaration on generative AI use.}
At an earlier stage of this study, a suggestion generated by GPT-5.5 Pro \cite{OpenAIGPT55Pro} led us to test the corrected algebraic reductions by a brute-force Gr\"obner-basis calculation for the case of balanced BTP fourfolds with non-zero constant Chern holomorphic sectional curvature. We subsequently carried out this computation; it gave a computer-assisted verification of the K\"ahlerness in this subcase. The present article uses a different approach and does not rely on computer-assisted computations.

\vspace{0.3cm}

\noindent\textbf{Author contributions.}
All authors declare that each of them made substantial contributions to the conception of the work, drafted the work or revised it critically for important intellectual content, approved the version to be published, and agrees to be accountable for all aspects of the work in ensuring that questions related to the accuracy or integrity of any part of the work are appropriately investigated and resolved.

\vspace{0.3cm}

\noindent\textbf{Competing interests.}
All authors declare that there are no competing interests for this article.

\end{document}